\newtheorem{theorem}{Theorem}[section]
\newtheorem{lemma}[theorem]{Lemma}
\newtheorem{proposition}[theorem]{Proposition}
\newtheorem{corollary}[theorem]{Corollary}
\theoremstyle{definition}
\newtheorem{definition}[theorem]{Definition}
\newcommand{\V}{\mathcal{V}}
\newcommand{\T}{\mathcal{T}}
\newcommand{\modd}{\ \mbox{mod} \ }
\newcommand{\rep}{\zeta^{irr}_{{M_n},p}(s)}
\newcommand{\VVV}{\mathbf{v}}
\begin{document}

\title{Irreducible representations of a family of groups of maximal nilpotency class I: the non-exceptional case}
\author{Shannon Ezzat}

\maketitle

\begin{abstract}
We use a constructive method to obtain all but finitely many $p$-local representation zeta functions of a family $M_n$ of finitely generated nilpotent groups with maximal nilpotency class. For representation dimensions coprime to all primes $p<n$, we construct all irreducible representations of $M_n$ by defining a standard form for the matrices of these representations and, after taking into account twisting and isomorphism, count these twist isoclasses to obtain our $p$-local zeta functions.   
\end{abstract}

\section{Introduction}

Representation growth is a fairly recent area in group theory where one studies (usually infinite) groups by studying the sequence of the number of (sometimes equivalence classes of) irreducible complex representations of degree $n$ for all $n \in \mathbb{N}.$ More formally, for all $n \in \mathbb{N}$ let $r_n(G)$ be the number of irreducible representations of degree $n$. If all $r_n(G)$ are finite, we can study this sequence by embedding them as coefficients in a zeta function. 

In this paper, we deal with the case of a family of finitely generated torsion-free nilpotent groups. Henceforth, we call finitely generated torsion-free nilpotent groups $\T$-groups. If $G$ is a $\T$-group, it is known that all $r_n(G)$ are infinite. However, we can redefine $r_n(G)$ as the number of irreducible  representations ``up to twisting''.

More formally, let $G$ be a $\T$-group. Let $\chi$ be a 1-dimensional complex representation and $\rho$ an $n$-dimensional complex representation of $G$. We define the product $\chi  \otimes \rho$ to be a \emph{twist} of $\rho$. Two representations $\rho$ and $\rho_*$ are \emph{twist-equivalent} if for some 1-dimensional representation $\chi$, we have that $\chi \otimes \rho \cong \rho_*.$ This twist-equivalence is an equivalence relation on the set of irreducible representations of $G$. In \cite{LM} Lubotzky and Magid call the equivalence classes \emph{twist isoclasses}. We say $S_\rho,$ the twist isoclass containing and irreducible representation $\rho$, is of dimension $n$ if and only if $\rho$ is an $n$-dimensional representation. They also show that there are only finitely many irreducible $n$-dimensional complex representations up to twisting and that for each $n \in \mathbb{N}$ there is a finite quotient $G(n)$ of $G$ such that each $n$-dimensional irreducible representation $\rho$ of $G$ is twist-equivalent to one that factors through $G(n)$. Henceforth we call the $n$-dimensional complex representations of $G$ simply representations. We denote the number of twist isoclasses of irreducible representations of dimension $n$ by $r_n(G)$ or $r_n$ if no confusion will arise.

We now discuss embedding the sequence $(r_n)$ as coefficients of a zeta function. Consider the formal expression \begin{equation*} \zeta^{irr}_G(s) = \sum_{n=1}^\infty r_n(G) n^{-s}.
\end{equation*} If $\zeta^{irr}_G(s)$ converges for a right half plane of $\mathbb{C}$, say $D$, where $D := \{s \in \mathbb{C} \ | \ \Re(s) > \alpha\}$ for some $\alpha \in \mathbb{R}$, we call $\zeta^{irr}_G: D \to \mathbb{C}$ the (global) \emph{representation zeta function} of $G$. For any $\T$-group such a $D$ always exists \cite[Lemma 2.1]{SV}. We call $\alpha_G:=\alpha$ the \emph{abscissa of convergence} of $\zeta^{irr}_G(s)$. Let $\zeta^{irr}_{G,p} (s)$, where \begin{equation*}\zeta^{irr}_{G,p} (s) = \sum_{n=0}^\infty r_{p^n}(G) p^{-ns}, \end{equation*} be the \emph{$p$-local representation zeta functions} of $\zeta^{irr}_G(s).$ Considering the domain $D$ of $\zeta^{irr}_{G,p}(s)$ as above, we say $\alpha_{G,p}:=\alpha_p$ is the \emph{$p$-local abscissa of convergence} of $\zeta^{irr}_{G,p}(s).$ 

We know, by \cite[Theorem 6.6]{LM} that in each twist isoclass there exists a representation $\rho$ such that $\rho$ factors through a finite quotient. Since $G$ is nilpotent, its finite quotients are nilpotent and therefore decompose as a direct product of their Sylow-$p$ subgroups. Since the irreducible representations of direct products of finite groups are the tensor products of irreducible representations of their factors, its representation zeta function decomposes into an Eulerian product of its $p$-local representation zeta functions and therefore $\zeta^{irr}_G (s) = \prod_{p} \zeta^{irr}_{G,p} (s)$.  Moreover, it was shown by Hrushovski and Martin \cite{HM} that these $p$-local representation zeta functions are rational functions in $p^{-s}.$

Let $M_n = \langle a_1, \ldots, a_n,b  \ | \ [a_i,b]=a_{i+1} \rangle$. Following the conventions of other papers in this area, all commutators that do not appear in (or follow from) the relations are trivial. In this paper, we discuss the irreducible representations and the representation zeta functions of this family of groups. Additionally, we refer often to certain subgroups of $M_n.$ It is clear that for $ 2 \leq k < n$ the group $M_k$ is isomorphic to a subgroup of $M_n.$ With a slight abuse of notation we let the subgroup $M_k= \langle  a_{n-k+1}, a_{n-k+2}, \ldots, a_{n},b \rangle.$

\section{Related Results}

The idea of using zeta functions to study representation growth was introduced in \cite{Witten}, in which Witten studies compact Lie groups. Later, representation zeta functions were studied in \cite{LMar}, where Lubotzky and Martin use representation zeta functions to study arithmetic groups, and in \cite{JZ} where Jaikin develops a method for calculating the representation zeta functions of compact $p$-adic analytic groups with property FAb. This method uses Howe's work \cite{Howe} on the Kirillov orbit method and the concept of $p$-adic integration to calculate the zeta functions.

Representation zeta functions of $\T$-groups were first studied by Hrushovski and Martin in \cite{HM} using model-theoretic methods. The study of representation growth of $\T$-groups was expanded by Voll in \cite{voll1}. In that paper, Voll develops a method for calculating $p$-local representation zeta functions for a given $\T$-group $G.$ This method, like the method that appears in \cite{JZ}, involves Howe's work in \cite{Howe}. Note that a large part of the method that appears in \cite{voll1} is also used in that paper to study other types of algebraic growth, including subgroup and subring growth. Stasinski and Voll, in \cite{SV}, generalize Voll's work in \cite{voll1} to $\T$-groups coming from unipotent group schemes. The authors also generalize the functional equation that appears in \cite{voll1}.

Representation zeta functions have been used to study other classes of groups. We briefly mention some work done in these areas. In \cite{Voll3} Avni et al.\ study compact $p$-adic analytic groups and arithmetic groups. In \cite{Avnietal}, Avni et al.\ study representations of arithmetic lattices and prove a conjecture by Larsen and Lubotzky. In \cite{Avni} Avni shows that arithmetic groups have representation growth with rational abscissa of convergence. Bartholdi and de la Harpe, in \cite{BdlH}, study representation zeta functions of wreath products with finite groups. Craven, in \cite{Craven}, gives lower bounds for representation growth for profinite and pro-$p$ groups.

\section{Layout of Paper}

We employ a constructive method, first used in \cite{Ezzat}, to calculate the irreducible representations and representation zeta functions of this family of groups. The constructive method allows us to calculate the $p$-local representation zeta functions of $M_n$ when $p$ is an exceptional prime. Informally, we say a prime $p$ is exceptional if $p$ appears as a denominator in the equations that determine the possible eigenvalues of the linear operators of our representation. We will formally define these primes later.

Indeed, the family $M_n$ is a good choice of a family of $\T$-groups to study using the constructive method  for a number of reasons. The relatively simple eigenspace structure of the irreducible representations, due to the large abelian subgroup inside, allows us to construct all of the irreducible representations of each $M_n$ for all but finitely many prime-power dimensions. Additionally, for small $n$, it is tenable to construct all of the irreducible representations for the exceptional-prime-degree representations as well. This gives us all of the irreducible representation theory (and thus the global representation zeta functions) for $M_2,M_3,$ and $M_4$. Also, it gives us an infinite number of examples of exceptional $p$-local zeta functions of groups of "almost arbitrary" nilpotency class; that is, given a number $c$, there is a prime $q$ such that $q+1>c$ and the $q$-local representation zeta function of $M_{q+1}$ is known. Furthermore, together with the results of the calculations of the $2$-local, and $2$-local and $3$-local representation zeta functions of $M_3$ and $M_4$, respectively,  we have two examples of global representation zeta functions of $\T$-groups of nilpotency class greater than $2$. To the author's knowledge, these are the only two examples in the literature. For results involving $p$-local representation zeta functions of exceptional primes, see the sequel to this paper \cite{MaxClassII}.

This paper describes the construction of the irreducible representations of the maximal class group of nilpotency class $n$, denoted $M_n$. This is achieved by calculating the irreducible representations of $p$-power dimension. The large abelian subgroup allows us to simultaneously diagonalize all but one element of the images of the generators and reflected by this fact is the relatively simple eigenspace structure of the irreducible representations. We note that the calculation is uniform for most primes, in fact primes $p$ not less than the nilpotency class $n$; denominators that appear in the matrices of the representation are smaller than the prime considered and therefore behave as units mod $p.$ When the prime considered is smaller than $n$, the calculation loses its uniformity and the structure of the matrices of the representation differs from the non-exceptional cases; again, these will be dealt with in this paper's sequel.

\section{Preliminaries}

First, we introduce an important lemma that gives us much information about the eigenspace structure of representations of certain nicely behaved $\T$-groups, including $M_n$. Before this lemma, we give a definition regarding eigenspaces of a set of linear operators.

\begin{definition}
Let $\mathcal{L}$ be a set of linear operators of a vector space $V.$ If a subspace $W \subseteq V$ is an eigenspace of each $L \in \mathcal{L}$ then we say that $W$ is a \emph{mutual eigenspace} of $\mathcal{L}.$
\end{definition}

\begin{lemma}\label{yform}
For some set of $\alpha_{j,k} \in \mathbb{Z}$ let $G:=\langle a_1, \ldots, a_n,b_1, \ldots b_m \ | \ [a_i,b_j]=A_{i,j} \rangle,$ where $A_{i,j}:=\prod_{k=i+1}^{n}a_k^{\alpha_{j,k}},$ be a $\mathcal{T}$-group and let $\rho(G)$ be an irreducible $p^N$-dimensional representation of $G.$ Also, let $\rho(a_i)=x_i, \ $ $\rho(b_j)=y_j$, and $X_{i,j}:=\rho(A_{i,j})$ for all $i \leq n$ and $j \leq m.$ Define $X:=\{x_1,\ldots, x_n\},$ and $Y:=\{y_1 \ldots y_m\}$. Then the mutual eigenspaces of $X$ are one-dimensional and there are $p^N$ distinct mutual eigenspaces.
\end{lemma}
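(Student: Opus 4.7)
The plan is to apply Clifford's theorem to the abelian normal subgroup $A:=\langle a_1,\ldots,a_n\rangle$, and then to show that the stabilizer of an occurring character acts on its isotypic component through an abelian quotient, forcing each mutual eigenspace to be one-dimensional.

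First I would verify that $A$ is abelian (no commutator relations are imposed among the $a_i$) and normal in $G$ (because $b_j^{-1}a_ib_j = a_i A_{i,j}\in A$). Hence the operators in $X$ commute pairwise. Replacing $\rho$ by a twist that factors through a finite quotient of $G$ (which rescales eigenvalues but leaves the eigenspace decomposition intact), each $x_i$ has finite order and is diagonalizable, so $V=\bigoplus_\chi V_\chi$ with the sum indexed by the characters $\chi$ of $A$ appearing in $\rho|_A$. Clifford's theorem for the normal subgroup $A$ then says that the appearing characters form a single $G$-orbit $O$, that the components $V_\chi$ share a common dimension $e$, and that $e\cdot|O|=p^N$; the lemma thus reduces to showing $e=1$.

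Fix $\chi\in O$ and set $T:=\text{Stab}_G(\chi)$. By the Clifford correspondence $V_\chi$ is irreducible as a $T$-module, so it suffices to show $T$ acts on $V_\chi$ through an abelian quotient. Since the convention $[b_i,b_j]=1$ makes $G/A$ abelian, $[T,T]\subseteq[G,G]\subseteq A$, and hence $[T,T]$ acts on $V_\chi$ by the scalar $\chi$. For any $a\in A$ and $t\in T$,
\[
\chi([a,t]) \;=\; \chi(a)^{-1}\chi(t^{-1}at) \;=\; \chi(a)^{-1}\chi^{t}(a) \;=\; 1,
\]
since $t$ fixes $\chi$. Writing arbitrary $t_1,t_2\in T$ as $t_i=a_is_i$ with $a_i\in A$ and $s_i\in\langle b_1,\ldots,b_m\rangle$, and expanding by the identity $[xy,z]=[x,z]^y[y,z]$, $[t_1,t_2]$ becomes a product of $T$-conjugates of terms of the form $[a,s]$ (each with $\chi$-value $1$ by the above) together with the factor $[s_1,s_2]$, which vanishes in $G$ because the $b_j$ pairwise commute. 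Hence $\chi|_{[T,T]}=1$, so $T$ acts on $V_\chi$ through an abelian quotient, and irreducibility forces $\dim V_\chi=e=1$. Consequently $|O|=p^N$, giving the claimed count of distinct mutual eigenspaces.

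The main obstacle I anticipate is the commutator-expansion bookkeeping in the final step: one must check that the lifts $s_i$ may indeed be chosen inside $\langle b_1,\ldots,b_m\rangle$ so that $[s_1,s_2]$ is literally trivial, and that the $T$-conjugating factors appearing in the expansion preserve the vanishing of $\chi$ on the $[a,s]$ pieces. Both points hinge on the abelianness of the subgroup generated by the $b_j$ and on the fact that $T\supseteq A$ is closed under conjugation by its own elements; once these are secured, the rest of the argument is essentially formal.
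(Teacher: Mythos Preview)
Your proposal is correct. The concerns you raise at the end are both resolvable: since $A$ is normal and $G/A$ is generated by the images of the $b_j$, every $g\in G$ can be written as $g=as$ with $a\in A$ and $s\in B:=\langle b_1,\ldots,b_m\rangle$; and since $A\subseteq T$, the factor $s=a^{-1}t$ lies in $T\cap B$ whenever $t\in T$, so the conjugating elements in your expansion are in $T$ and $\chi$ is invariant under them. Thus $\chi$ vanishes on each piece and hence on $[T,T]$.

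The paper's proof reaches the same conclusion by a more elementary, self-contained route: rather than quoting Clifford's theorem, it shows directly (by induction on $i$) that each $y_j$ sends a mutual eigenvector to a mutual eigenvector, deduces transitivity from irreducibility, and then argues that the restriction of $\rho$ to $H:=\langle S,a_1,\ldots,a_n\rangle$, where $S=\mathrm{Stab}_B(E)$, is irreducible on $E$. Since the $a_i$ act as scalars on $E$ and $S\subseteq B$ is abelian, Schur's lemma forces every element of $H$ to act as a scalar on $E$, whence $\dim E=1$. Your approach packages the first half as Clifford theory and the second as the commutator computation $\chi|_{[T,T]}=1$. These are the same idea in different clothing: note that $T=A\cdot(T\cap B)$ with $T\cap B=S$, so the paper's observation ``$A$ acts by $\chi$ and $S$ is abelian'' already shows the image of $T$ in $GL(V_\chi)$ is abelian without any commutator bookkeeping. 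Your argument is thus correct but slightly longer than necessary on this last point; the Clifford-theoretic framing, on the other hand, makes the overall structure more transparent and generalises more readily.
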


\begin{proof}
Let $\mathcal{E}:= \{E_1, \ldots, E_t\}$ be the set of mutual eigenspaces of $X.$ We will show that if $\mathbf{v}$ is an eigenvector of $E_{j_1}$ then for any $y \in Y$ the vector $y\mathbf{v}$ is an eigenvector of $E_{j_2}$ for some $j_1,j_2 \leq t.$ We then show that $Y$ acts transitively on $\mathcal{E}$  and that all $E_j \in \mathcal{E}$ are of the same dimension, in fact a $p$-power. Finally, we show that each $E_j$ is one-dimensional, and $t=p^N.$

It is clear, by definition, that $x_n$ commutes with all $y \in Y.$ Let $E \in \mathcal{E}$ and let $\mathbf{v} \in E.$ For a given $y_j\in Y,$ consider $x_ny_j\VVV.$ Since $x_n$ is central we have that
\begin{equation}
x_ny_j\VVV=\lambda_{n,j}y_j\VVV
\end{equation} for some $\lambda_{n,j} \in \mathbb{C}^*.$

 For all $k<n,$ let $\lambda_{k}$ be such that $\lambda_{k}\VVV=x_k\VVV$. Now, as an induction, we choose $i<n$ and assume that, for each $h>i,$ $y_j\VVV$ is an eigenvector of each $x_h,$ with eigenvalue $\lambda_{h,j}.$ Then
\begin{equation}
\lambda_{i-1} y_j\VVV=y_j\lambda_{i-1} \VVV=y_jx_{i-1}\VVV=X_{j,i-1} x_{i-1}y_j\VVV=x_{i-1}X_{j,i-1}y_j\VVV=x_{i-1}\lambda^\prime y_j\VVV
\end{equation}
for some $\lambda^\prime \in \mathbb{C}^*.$ Note that the third equality is by the group relations and the final equality is by the inductive hypothesis. Thus $y_j\VVV$ is an eigenvector of $x_{i-1}$ with eigenvalue $\lambda_{i-1}(\lambda^\prime)^{-1}.$ This induction tells us that for any mutual eigenvector $\VVV$ of $X$ that, for any $y \in Y,$ $y\VVV$ is also a mutual eigenvector.

Let $\mathbf{v_1,v_2} \in E.$ For some $x_i$ and $\lambda_1,\lambda_2 \in \mathbb{C}^*$ let
\begin{equation} x_iy_j\mathbf{v_1}=\lambda_1y_j\mathbf{v_1} \text{ and } x_iy_j\mathbf{v_2}=\lambda_2y_j\mathbf{v_2}.
\end{equation}
It is clear that $\mathbf{v_1+v_2} \in E.$ Now consider
\begin{equation}
x_i y_j(\mathbf{v_1+v_2})=x_i y_j\mathbf{v_1}+x_i y_j\mathbf{v_2}=\lambda_1y_j\mathbf{v_1}+\lambda_2y_j\mathbf{v_2}.
\end{equation}
Since $y_j(\VVV_1+\VVV_2)$ must be an eigenvector of $x_i$ we have that $\lambda_1=\lambda_2$ and $y_j\cdot E=E_j$ for some $E_j \in \mathcal{E}.$ Since $y_j$ is invertible it preserves dimension and $\text{dim}(E_{j_1})=\text{dim}(E_{j_2})$ for $j_1,j_2 \leq t.$ Also, since $y_j$ was arbitrary and $\rho$ is irreducible it must be that $\text{span}(\langle Y \rangle \cdot E)$ must be the entire space $\mathbb{C}^{p^N}$ and thus $Y$ must act transitively on $\mathcal{E}.$  It follows, by counting, that $t=p^r$ and, for $j \leq t,$ we have that $\text{dim}(E_j)=p^s$ for $r,s$ such that $r+s=N.$

Let $B= \langle b_1, \ldots, b_m \rangle.$ For an eigenspace $E \in \mathcal{E}$ let $S=\text{Stab}_B(E),$ and let $Y_*:=\rho(S).$  Let $W \subseteq E$ be a $S$-stable subspace. Let $H = \langle S,a_1 \ldots, a_n\rangle$ and let $\eta:H \to GL_{p^{s}}(E)$ be the restriction of $\rho$ to $H.$ Since $E$ is a mutual eigenspace of the each $a_i$ it is clear that $\eta(a_i)=k_iI$ for some scalars $k_i$ and thus the $\eta$-stable subspaces are the $S$-stable subspaces. Consider the $B$-orbit of $W$, say $O.$ Since $\rho$ is irreducible then $\text{dim}(O)=p^N$ and since $W$ is $S$-stable it must be that $W=E.$ Thus $E$ has no proper stable subspaces and $\eta$ is irreducible. By \cite{LM} we have that $\eta$ factors through a finite quotient up to twisting. By assumption and Schur's Lemma, since $S$ is abelian, each $y_*\in Y_*$ must be a scalar matrix. Thus, since $\eta$ is irreducible, $\text{dim}(E)=1.$ It then follows that all mutual eigenspaces of $X$ are 1-dimensional and, since $\langle Y \rangle$ acts transitively on $\mathcal{E},$ we have that $|\mathcal{E}|=p^N.$
\end{proof}

We now introduce some notation for complex roots of unity.

\begin{definition}\label{def:SpN}
Let $S_p^{\infty}$ be the all complex $p^\ell$th roots of unity for all $\ell \in \mathbb{N}$ and $S_p^k$ be the $p^k$th roots of unity (and note that $S_p^k\backslash S_p^{k-1}$ are the primitive $p^k$th roots of unity). Define $s:S^\infty_p \to \mathbb{N}$ such that  $s(\lambda)=k$ if and only if $\lambda \in S_p^k \backslash S_p^{k-1}.$ If $s(\lambda)=k$ we say that $\lambda$ has \emph{depth} $k.$
\end{definition}

The calculation of the irreducible representations of $M_n$ will involve generalizations of triangle numbers, namely $k$-simplex numbers. Let $T_0(0)=1, \  T_0(j)=1,$ and $T_j(0)=0$ for $j \in \mathbb{N}$ and recursively define $T_{k}(j)=\sum_{l=1}^j T_{k-1}(l)=T_{k}(j-1)+T_{k-1}(j)$  for $k \in \mathbb{N}.$ The next lemma lists some properties of these numbers that are needed. We state these without proof.

\begin{lemma} Let $i,j,k,b \in \mathbb{N}$ and $T_{k}(j)$ be defined as above.
\begin{enumerate}[i.] \label{tlemma}
\item $T_{k}(i)= {{i+k-1}\choose{k}}=\frac{i(i+1)\ldots(i+k-1)}{k!}.$
\item $T_{k}(i)-T_k(j) = (i-j)\frac{\gamma}{k!}$ for some $\gamma \in \mathbb{Z}.$
\item Let $p > k.$ Then for any $b \in \mathbb{N}$ and $\alpha$ such that $1 \leq \alpha \leq p-1$ we have $T_k(\alpha p^b + j) = T_k(j) \mod p^b.$
\item $T_k(j+1)=T_k(j)+T_{k-1}(j)+ \ldots + T_0 (j).$
\item $T_k(i+j) = \sum_{l=0}^{k}T_l(i)T_{k-l}(j).$
\item If $p>k$ then $T_k(p^N-1)=0 \mod p^N.$
\end{enumerate}
\end{lemma}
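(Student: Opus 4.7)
The plan is to establish the closed form (i) first and derive the remaining items as consequences of standard binomial-coefficient manipulations. For (i), I would induct on $k$: the base case is immediate from the definitions (with the convention $\binom{-1}{0} = 1$ covering $T_0(0)$), and the inductive step rewrites
\[ T_k(j) = \sum_{l=1}^{j} T_{k-1}(l) = \sum_{l=1}^{j} \binom{l+k-2}{k-1}, \]
which collapses to $\binom{j+k-1}{k}$ by the hockey-stick identity. Identity (iv) then follows by iterating the defining recurrence $T_k(j+1) = T_k(j) + T_{k-1}(j+1)$, then $T_{k-1}(j+1) = T_{k-1}(j) + T_{k-2}(j+1)$, and so on, terminating with $T_0(j+1) = T_0(j) = 1$.

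Items (ii) and (v) are purely formal identities that flow from (i). For (ii), view $f(x) = x(x+1)\cdots(x+k-1)$ as a polynomial in $\mathbb{Z}[x]$; by the factor theorem applied in $\mathbb{Z}[i,j]$, $f(i) - f(j) = (i - j)\, g(i,j)$ for some $g \in \mathbb{Z}[i,j]$, and dividing by $k!$ gives the claim with $\gamma = g(i,j)$. For (v), the cleanest route is generating functions: combining $(1-x)^{-(i+j)} = (1-x)^{-i}(1-x)^{-j}$ with the standard expansion $(1-x)^{-r} = \sum_{n \ge 0}\binom{r+n-1}{n} x^n$ and reading off the coefficient of $x^k$ gives (v) after rewriting the binomials via (i).

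The arithmetic statements (iii) and (vi) both exploit the hypothesis $p > k$, which makes $k!$ a unit modulo any power of $p$. For (iii), I would expand
\[ k! \cdot T_k(\alpha p^b + j) = \prod_{s=0}^{k-1}(\alpha p^b + j + s) \]
and reduce each factor modulo $p^b$; the product collapses to $\prod_{s=0}^{k-1}(j + s) = k! \cdot T_k(j)$, and inverting $k!$ mod $p^b$ finishes. For (vi), write
\[ T_k(p^N - 1) = \frac{(p^N - 1)\, p^N\, (p^N + 1) \cdots (p^N + k - 2)}{k!} \]
and pull the explicit factor of $p^N$ out of the numerator; since $T_k(p^N - 1) \in \mathbb{Z}$ by (i) and $\gcd(k!, p) = 1$, the surviving integer must be divisible by $k!$, leaving $T_k(p^N - 1)$ as $p^N$ times an integer. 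No single step is a genuine obstacle; the closest thing to a subtlety is recognizing (v) as a shifted Chu--Vandermonde convolution, but the generating-function viewpoint makes this transparent.
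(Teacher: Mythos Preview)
The paper explicitly declares that this lemma is ``stated without proof,'' so there is no argument in the paper to compare against; your sketch stands on its own and each of the six items is handled correctly by entirely standard means.

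One small point of care: in your treatment of (vi) you display the numerator as $(p^N-1)\,p^N\,(p^N+1)\cdots(p^N+k-2)$, which presupposes $k\ge 2$ so that the factor $p^N$ actually occurs. For $k=1$ one has $T_1(p^N-1)=p^N-1\equiv -1\pmod{p^N}$, so the assertion as literally written in the lemma fails there; your argument is thus a proof of the correct statement (namely $k\ge 2$), and it would be worth flagging this explicitly. This is a defect of the lemma's formulation rather than of your proof, and in the paper's applications of (vi) the relevant exponents always appear alongside a separate $p^N$th power, so nothing downstream is affected.
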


As a corollary of (iii) we have the following.

\begin{corollary}\label{termequality}
Let $p$ be a prime, let $k <p$, let $N \geq 1,$ let $1 \leq m \leq N,$ let $\alpha \in \mathbb{N}$ such that $p \nmid \alpha,$ and, for $j \geq 0$, let
    \begin{equation}
    \Gamma(k,j)= \alpha p^{m} T_k(j-1).
    \end{equation}
Then we have that $\Gamma(k,\beta p^{N-m}+j+1)=\Gamma(k,j+1) \mod p^N$ for all $\beta$ such that $1 \leq \beta < p^m$ and all $j$ such that $0 \leq j \leq p^{N-m}-1.$
\end{corollary}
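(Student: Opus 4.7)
The plan is to reduce the stated congruence directly to Lemma \ref{tlemma}(iii) via a base-$p$ expansion of $\beta$. First I would unpack the claim: $\Gamma(k,\beta p^{N-m}+j+1)\equiv\Gamma(k,j+1)\pmod{p^N}$ means $\alpha p^{m}T_k(\beta p^{N-m}+j)\equiv \alpha p^{m}T_k(j)\pmod{p^N}$, and cancelling the common factor $p^{m}$ and then the unit $\alpha\pmod{p^{N-m}}$ (using $\gcd(\alpha,p)=1$) reduces the problem to
\[ T_k(\beta p^{N-m}+j)\equiv T_k(j)\pmod{p^{N-m}}. \]
When $m=N$ the modulus is $1$ and the statement is vacuous, so I may assume $m<N$.

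The only mild obstacle is that Lemma \ref{tlemma}(iii) allows a coefficient of $p^b$ only in the range $1,\dots,p-1$, whereas our $\beta$ can be as large as $p^m-1$. To get around this I would write $\beta=\sum_{i=0}^{m-1}\beta_i p^i$ in base $p$ with digits $0\le\beta_i\le p-1$ and apply (iii) once per nonzero digit. Define a telescoping chain by $j_{-1}:=j$ and $j_i:=j_{i-1}+\beta_i p^{N-m+i}$ for $i=0,\dots,m-1$, so that $j_{m-1}=\beta p^{N-m}+j$. For each $i$ with $\beta_i\ne 0$, Lemma \ref{tlemma}(iii) applied with $b=N-m+i\ge 1$ and coefficient $\beta_i$ (using the hypothesis $p>k$) yields
\[ T_k(j_i)=T_k\bigl(\beta_i p^{N-m+i}+j_{i-1}\bigr)\equiv T_k(j_{i-1})\pmod{p^{N-m+i}}, \]
which in particular holds modulo $p^{N-m}$; the steps with $\beta_i=0$ are trivial since then $j_i=j_{i-1}$.

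Chaining these congruences across $i=0,\dots,m-1$ gives $T_k(\beta p^{N-m}+j)\equiv T_k(j)\pmod{p^{N-m}}$, and multiplying back by $\alpha p^{m}$ recovers the statement. The argument is essentially mechanical; the substantive input is Lemma \ref{tlemma}(iii), and the only real idea beyond it is to iterate (iii) once per base-$p$ digit of $\beta$ in order to handle coefficients of $p^{N-m}$ that exceed $p-1$.
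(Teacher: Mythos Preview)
Your proof is correct. The reduction to $T_k(\beta p^{N-m}+j)\equiv T_k(j)\pmod{p^{N-m}}$ is valid, and iterating Lemma~\ref{tlemma}(iii) along the base-$p$ digits of $\beta$ cleanly handles the fact that (iii), as stated, only allows a coefficient in $\{1,\dots,p-1\}$.

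Your route is genuinely different from the paper's. The paper does not reduce the modulus or iterate (iii); instead it invokes the closed form $T_k(i)=\frac{i(i+1)\cdots(i+k-1)}{k!}$ from Lemma~\ref{tlemma}(i), writes out
\[
\alpha p^{m}\,\frac{(\beta p^{N-m}+j)(\beta p^{N-m}+j+1)\cdots(\beta p^{N-m}+j+k-1)}{k!},
\]
and observes that in the expansion of the numerator every term carrying at least one factor $\beta p^{N-m}$ is killed modulo $p^N$ after multiplication by $p^{m}$ (the denominator $k!$ being a unit since $k<p$). This polynomial-expansion argument is more direct and sidesteps the restriction on the coefficient in (iii) entirely, so no base-$p$ telescoping is needed. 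Your approach, by contrast, is a clean black-box reduction to (iii) and never touches the explicit product formula; the price is the extra bookkeeping of the digit-by-digit chain, but it makes the dependence on (iii) transparent.
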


\begin{proof}
Consider $\Gamma(k,\beta p^{N-m}+j+1).$ We have that
    \begin{align}
    \Gamma(k,\beta p^{N-m}+j+1) ={}& \alpha p^m \frac{(\beta p^{N-m}+j)\ldots (\beta p^{N-m}+(j+k-1))}{k!} \mod p^N
    \end{align}
By Lemma \ref{tlemma}(iii), and noting that $p^m(p^{N-m})=0 \mod p^N$ we have that only the term with no factor of $p^{N-m}$ survives mod $p^N;$ that is,
\begin{equation}
\alpha p^m(\beta p^{N-m}+j)\ldots (\beta p^{N-m}+(j+k-1))= \alpha p^m j(j+1)\ldots(j+k-1)
\end{equation}
and thus $\Gamma(k,\beta p^{N-m}+j+1)=\Gamma(k,j+1).$
\end{proof}

\section{Representation Structure and Standard Form}

We now calculate the $p$-local representation zeta function of each $M_n$, which we denote $\rep$, by explicitly constructing representatives of each twist isoclass. Let $\rho$ be a $p^N$-dimensional irreducible representation of $M_n$ and let $x_i=\rho(a_i)$ and $y=\rho(b)$.

In this section we will choose a basis for the image of $\rho$ such that $y$ is in the form of a $p^N$-cycle permutation matrix and such that each $x_i$ is diagonal with each diagonal entry in a certain form, discussed later in the section. It is not necessary to state a basis to understand the eigenspace structure of $\rho.$ However, as a canonical basis is easy to determine in this case, we appeal to a basis as an indexing device on the set of mutual eigenspaces of $\{x_1, \ldots, x_n\}$. 

We begin by considering twisting. Since $x_2, \ldots, x_n$ are commutators they are invariant under twisting. We can twist $y$ and $x_1$ by any complex number. We remind the reader that we can obtain every $p^N$-power irreducible representation of $M_n$ by twisting a representative $\rho$ from each twist isoclass.

Since all $x_i$ commute they are all simultaneously diagonalizable. By \cite[Theorem 6.6]{LM} all irreducible representations factor through a finite quotient (up to twist equivalence) and thus by Schur's lemma the central element $x_n$ is a scalar matrix.

By the group presentation of $M_n$ it is clear we can apply Lemma \ref{yform}. Let $X=\{x_1,\ldots, x_n\}.$ Then we know the mutual eigenspaces of $X$ are $1$-dimensional and that there are $p^N$ distinct mutual eigenspaces. Also, we have that $\langle y \rangle$ must permute the eigenspaces of $X$ transitively. Thus, $y$ must act as a $p^N$-cycle on the mutual eigenspaces of $X.$ We choose our basis, with basis vectors $\{e_1, \ldots, e_{p^N}\}$ such that the $x_i$ are diagonal and

\begin{equation}\label{formy}
y =\left( \begin{array} {cccc} 0 &  &  & k  \\ 1 & \ddots & &  \\  & \ddots
 & \ddots &  \\  &  & 1
 & 0 \end{array} \right)
\end{equation}
for some $k \in \mathbb{C}^*.$ By cofactor expansion of $y-\lambda I$, we have that the characteristic equation is $\lambda^{p^N}=k,$ and thus the eigenvalues of $y$ are all of the $p^N$th roots of $k.$ However, we have the freedom to twist $y$ by $k^{\frac{-1}{p^N}}$ and therefore we can ensure the eigenvalues of $y$ are all of the $p^N$th roots of unity. We can choose a representative of our twist isoclass such that $y$ is a true $p^N$-cycle permutation matrix under some choice of basis, or equivalently, we can ensure that $k=1.$

We set up some notation. Let $\lambda_{i,j}$ be the $j$th entry on the diagonal of $x_i$. We let $x_n =\lambda_n I$ and $\lambda_{i}=\lambda_{i,1}$. Note that $\lambda_{n,j}=\lambda_n$ for all $j.$ By twisting we can ensure that $\lambda_1=1$. It will be shown that the $\lambda_{i,j}$, and thus $\rho$, are determined by the $\lambda_\ell,$ for all $\ell$ such that $i \leq \ell \leq n.$

We now determine the structure of the matrices $x_i$ and the allowable values for the $\lambda_i.$ The next lemma is the base case for the inductive lemma following it. Although we could start the induction with $x_n$, this case is trivial. For purposes of elucidation, we start this induction with $x_{n-1}.$ Note that this lemma is true for all primes.

\begin{lemma}\label{lambdan}
The matrix $x_{n-1}$ has the form

\begin{equation}x_{n-1} =\left( \begin{array} {cccc} \lambda_{n-1} &  &  &   \\  & \lambda_n \lambda_{n-1} & &  \\  &
 & \ddots &  \\  &  &
 & \lambda_n^{p^N-1} \lambda_{n-1} \end{array} \right).\end{equation}
Moreover, for any prime $p$, we have that $\lambda_n$ is a $p^N$th root of unity; that is $s(\lambda_n) \leq N$.
\end{lemma}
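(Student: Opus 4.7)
The plan is to exploit the single nontrivial relation involving $a_{n-1}$, namely $[a_{n-1},b]=a_n$, which under $\rho$ becomes the matrix identity $y^{-1}x_{n-1}y = x_{n-1}x_n$. Since the basis has already been chosen so that $y$ is the standard cyclic permutation sending $e_j \mapsto e_{j+1 \bmod p^N}$, and $x_{n-1}$ acts diagonally as $x_{n-1}e_j=\lambda_{n-1,j}e_j$, while $x_n = \lambda_n I$ by centrality and Schur's lemma, I would simply apply both sides of $y^{-1}x_{n-1}y = x_{n-1}x_n$ to $e_j$.

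On the left, $y^{-1}x_{n-1}y\, e_j = y^{-1}x_{n-1}e_{j+1} = \lambda_{n-1,j+1}\,e_j$; on the right, $x_{n-1}x_n e_j = \lambda_n\lambda_{n-1,j}\,e_j$. Equating these yields the one-step recursion
\begin{equation*}
\lambda_{n-1,j+1} = \lambda_n\,\lambda_{n-1,j},
\end{equation*}
which iterates to $\lambda_{n-1,j} = \lambda_n^{\,j-1}\lambda_{n-1}$ for $1 \leq j \leq p^N$, giving precisely the claimed diagonal form.

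For the second assertion, I would use the fact that $y$ is a genuine $p^N$-cycle, so indices are to be read modulo $p^N$; equivalently, the diagonal matrix $x_{n-1}$ is fixed and must have its $(p^N+1)$-st entry agree with its first. Substituting $j = p^N$ into the recursion gives $\lambda_{n-1,1} = \lambda_{n-1,p^N+1} = \lambda_n^{p^N}\lambda_{n-1}$, hence $\lambda_n^{p^N} = 1$. By Definition \ref{def:SpN} this is exactly $s(\lambda_n) \leq N$.

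There is no real obstacle here: the lemma is a direct computation using the chosen standard form for $y$, the diagonality of the $x_i$, and a single group relation. The only point worth flagging is that the conclusion $\lambda_n^{p^N}=1$ does not require anything about $p$ (so the restriction to non-exceptional primes is genuinely unnecessary at this step), which is why the lemma is stated for all primes and serves as the base case for the inductive determination of the remaining $x_i$.
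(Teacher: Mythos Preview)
Your argument is correct and is essentially identical to the paper's: both use the relation $[x_{n-1},y]=x_n$ together with the chosen form of $y$ to obtain the recursion $\lambda_{n-1,j+1}=\lambda_n\lambda_{n-1,j}$ (with the wrap-around equation $\lambda_{n-1,1}=\lambda_n\lambda_{n-1,p^N}$), iterate it to get the diagonal entries, and combine the equations to conclude $\lambda_n^{p^N}=1$. The only difference is that you spell out the application to basis vectors $e_j$ explicitly, which the paper leaves implicit.
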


\begin{proof}
Since by our group relations $[x_{n-1},y]=x_n = \lambda_n$ we have that $\lambda_{n-1,j+1}=\lambda_n\lambda_{n-1,j}$ for $j=1,\ldots,p^N-1$ and $\lambda_{n-1,1}=\lambda_{n}\lambda_{n-1,p^N}$. Combining these equations we have that $\lambda_{n-1}=\lambda_n^{p^N}\lambda_{n-1}$ and therefore $\lambda_n \in S_p^N.$
\end{proof}

We remind readers of Lemma \ref{tlemma} for properties of numbers $T_k(i).$ We define these numbers in the paragraph directly before Lemma \ref{tlemma}.

\begin{lemma}\label{formx}
For $ 1 \leq i \leq n-1$  we have that $\lambda_{i,j} =\prod_{k=i}^{n} \lambda_{k}^{T_{k-i}(j-1)}$ and thus the matrix $x_i$ has the structure
\begin{equation}x_{i} =\left( \begin{array} {cccc} \lambda_{i} &  &  &   \\  &  \prod_{k=i}^{n} \lambda_{k}^{T_{k-i}(1)} & &  \\  &
 & \ddots &  \\  &  &
 &  \prod_{k=i}^{n} \lambda_{k}^{T_{k-i}(p^N-1)} \end{array} \right).\end{equation}
Moreover we have that
\begin{equation}\label{cor1}
\lambda_i^{p^N} \prod_{k=i+1}^n \lambda_k^{T_{k-i}(p^N-1)} = 1.
\end{equation}
\end{lemma}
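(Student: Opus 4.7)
The plan is to prove both assertions by downward induction on $i$, bootstrapping from Lemma~\ref{lambdan} and using the group relation $[x_i,y]=x_{i+1}$ to propagate the closed form to smaller $i$; the normalization~\eqref{cor1} will then drop out by applying the resulting recursion one full cycle in $j$.

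First I would convert the group relation into a scalar recursion on the diagonal entries. An argument entirely analogous to the one in Lemma~\ref{lambdan}---using that $y$ cyclically permutes the chosen basis and each $x_k$ is diagonal, so that $x_i y x_i^{-1}y^{-1}$ is again diagonal---compares $(j,j)$-entries and produces the key identity
\[
\lambda_{i,j}=\lambda_{i,j-1}\,\lambda_{i+1,j}\qquad(1\le j\le p^N,\text{ read cyclically mod }p^N).
\]
The inductive step then telescopes this from $\ell=1$ up to $\ell=j$: assuming the closed form at level $i+1$, the exponent of $\lambda_k$ in $\lambda_{i,j}$ for $k\ge i+1$ becomes $\sum_{m=1}^{j-1}T_{k-i-1}(m)$, and the defining recursion of $T_r$ recalled in Lemma~\ref{tlemma} collapses this sum to exactly $T_{k-i}(j-1)$. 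Together with the $k=i$ contribution $\lambda_i^{T_0(j-1)}=\lambda_i$, this yields the product formula; the base case $i=n-1$ is Lemma~\ref{lambdan}, since $T_0(j-1)=1$ and $T_1(j-1)=j-1$ recover $\lambda_{n-1,j}=\lambda_{n-1}\lambda_n^{j-1}$.

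For~\eqref{cor1}, I would invoke the recursion one last time cyclically: the $j=1$ instance, with $j-1$ read as $p^N$, gives $\lambda_i=\lambda_{i,p^N}\,\lambda_{i+1}$, so $\lambda_{i,p^N}=\lambda_i/\lambda_{i+1}$. Plugging in the closed form for $\lambda_{i,p^N}$ just obtained and isolating the low-$k$ factors, using $T_0(p^N-1)=1$ and $T_1(p^N-1)=p^N-1$ to absorb the $\lambda_{i+1}$ contribution, rearranges directly to the asserted identity. The main obstacle I anticipate is the bookkeeping in the inductive step---confirming that the double sum collapses cleanly via the $T_r$ recursion and that the boundary case $k=i$ causes no trouble---but no new ideas are required beyond the identities already collected in Lemma~\ref{tlemma}.
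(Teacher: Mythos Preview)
Your proposal is correct and follows essentially the same argument as the paper: a downward induction on $i$ starting from Lemma~\ref{lambdan}, converting $[x_i,y]=x_{i+1}$ into the diagonal recursion $\lambda_{i,j}=\lambda_{i,j-1}\lambda_{i+1,j}$, telescoping, and collapsing the resulting sum of $T_{k-i-1}$'s via the defining recursion of the simplex numbers; the normalization~\eqref{cor1} is then read off from the cyclic wrap-around at $j=1$. The only cosmetic difference is indexing---the paper runs the induction from $i$ to $i-1$ rather than from $i+1$ to $i$---so your outline matches the paper's proof.
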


\begin{proof}
Assume

\begin{equation}\label{pqpq} x_{i} =\left( \begin{array} {cccc} \lambda_{i} &  &  &   \\  &   \prod_{k=i}^{n} \lambda_{k}^{T_{k-i}(1)} & &  \\  &
 & \ddots &  \\  &  &
 &  \prod_{k=i}^{n} \lambda_{k}^{T_{k-i}(p^N-1)} \end{array} \right)\end{equation} for some $i.$ By the group relation $[x_{i-1},y]=x_i$ we have, for some $j\leq p^N-1$, that \begin{equation}\lambda_{i-1,j+1} = \lambda_{i,j+1} \lambda_{i-1,j} = \left(\prod_{k=i}^{n} \lambda_{k}^{T_{k-i}(j) }\right)\lambda_{i-1,j}\end{equation} and \begin{equation}\lambda_{i-1,1} = \lambda_i \lambda_{i-1,p^N}.\end{equation} Combining the above equations for each $j,$ we have that \begin{equation}\lambda_{i-1,j+1} =\lambda_{i-1}\prod_{k=i}^{n} \lambda_{k}^{\sum_{l=1}^{j}T_{k-i}(l)}= \lambda_{i-1}\prod_{k=i}^{n}\lambda_{k}^{T_{k-i+1}(j)}\end{equation} and
\begin{equation}\lambda_{i-1,1} = \lambda_{i-1} \lambda_i^{p^N} \prod_{k=i+1}^n \lambda_k^{T_{k-i}(p^N-1)}.\end{equation}

\end{proof}

We have shown that, up to twisting and isomorphism, that any irreducible representation must be of the form given above. We give this a name.

\begin{definition}
The matrices $x_1, \ldots, x_n, y$ are in \emph{standard form} if the $x_i$ are in the form of Lemma \ref{formx} and $y$ is in the form of Equation \ref{formy}. We say $\rho$ is in \emph{standard form} if, under a chosen basis, the matrices $x_1, \ldots, x_n, y$ are in standard form.
\end{definition}

\section{Stable Subspaces}

In this section we determine possible stable subspaces of a representation $\rho.$ We show that if $\rho$ is not irreducible then it must have a certain proper stable subspace; we name this $V_{p^{N-1}}.$ Thus, to determine if $\rho$ is irreducible, we only need to check if $V_{p^{N-1}}$ is a stable subspace of $\rho.$ In this vein, let $V_{p^k}$ be the subspace spanned by $\langle y \rangle \cdot (e_1+e_{p^{k}+1}+ \ldots +e_{(p^{N-k}-1)p^{k}+1})$. Note two things: first, $V_{p^k}$ has dimension $p^k$; second, if $V_{p^{k}}$ is a stable subspace of $\rho$ then so is $V_{p^j}$ for $j \geq k.$

We define the $n$-tuple $\Lambda_n(k):=(\lambda_{1,k}, \ldots, \lambda_{n,k})$ where $k$ is considered mod $p^N.$

\begin{lemma}\label{tupleequality}
For any $k_1,k_2$ if $\Lambda_n(k_1)=\Lambda_n(k_2)$ then $\Lambda_n(k_1+1)=\Lambda_n(k_2+1).$
\end{lemma}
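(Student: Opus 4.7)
The plan is to show that the tuple $\Lambda_n(k+1)$ is a deterministic function of $\Lambda_n(k)$ alone; the lemma then follows instantly, since equal inputs yield equal outputs. All the ingredients are already present in the proof of Lemma \ref{formx}, so this should essentially be a bookkeeping exercise.

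First I would recall the recurrence on diagonal entries produced by the commutator relation $[x_{i-1},y]=x_i$. In the proof of Lemma \ref{formx} it is shown that
\begin{equation*}
\lambda_{i-1,j+1} = \lambda_{i,j+1}\,\lambda_{i-1,j} \qquad (2 \leq i \leq n,\ 1 \leq j \leq p^N-1),
\end{equation*}
and that the wraparound relation $\lambda_{i-1,1}=\lambda_i\,\lambda_{i-1,p^N}$ plays the analogous role when $j+1$ is read modulo $p^N$. Together with the fact that $x_n$ is the scalar matrix $\lambda_n I$, so that $\lambda_{n,j+1}=\lambda_{n,j}=\lambda_n$ for every $j$, these formulas give an explicit rule for producing $\Lambda_n(k+1)$ from $\Lambda_n(k)$.

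Concretely, starting from $\Lambda_n(k)=(\lambda_{1,k},\dots,\lambda_{n,k})$, one computes the coordinates of $\Lambda_n(k+1)$ from the bottom up: set $\lambda_{n,k+1}:=\lambda_{n,k}$, then for $i=n-1,n-2,\dots,1$ set $\lambda_{i,k+1}:=\lambda_{i+1,k+1}\,\lambda_{i,k}$. Every entry on the right-hand side is either an entry of $\Lambda_n(k)$ or an entry of $\Lambda_n(k+1)$ that has already been determined in this recursion, so the map $F\colon \Lambda_n(k)\mapsto \Lambda_n(k+1)$ is a well-defined function. Hence if $\Lambda_n(k_1)=\Lambda_n(k_2)$, then $\Lambda_n(k_1+1)=F(\Lambda_n(k_1))=F(\Lambda_n(k_2))=\Lambda_n(k_2+1)$, which is the desired conclusion.

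There is no real obstacle here; the only thing one needs to be careful about is the cyclic indexing of $j$ modulo $p^N$, which is handled uniformly by using the wraparound relation $\lambda_{i-1,1}=\lambda_i\,\lambda_{i-1,p^N}$ in place of the generic step when $k\equiv p^N-1\pmod{p^N}$. So the proof is essentially one short paragraph, and serves mainly to make explicit the recursive structure already visible in Lemma \ref{formx}.
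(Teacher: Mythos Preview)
Your proposal is correct and follows essentially the same approach as the paper: both use the recurrence $\lambda_{i,k+1}=\lambda_{i+1,k+1}\lambda_{i,k}$ together with the centrality of $x_n$ to show, by downward induction on the index $i$, that $\Lambda_n(k+1)$ is determined by $\Lambda_n(k)$. Your packaging of this induction as a well-defined map $F$ is just a cosmetic rephrasing of the paper's explicit inductive step.
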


\begin{proof}
By Lemma \ref{formx} we have that $\lambda_i \in S_p^{\infty}$ for all $i$ and that $\lambda_{i,j+1}=\lambda_{i+1,j+1}\lambda_{i,j}$ for all $j.$ Consider $\Lambda_n(k_1+1).$ It is clear to see that $\lambda_{n,k_1+1}=\lambda_{n,k_2+1}$ since $x_n$ is central.
Now, as our inductive step,  choose $h$ such that $h\leq n-1$ and assume that for all $i>h$ we have that $\lambda_{i,k_1+1}=\lambda_{i,k_2+1}.$ Consider $\lambda_{h,k_1+1}.$ By Equation \ref{pqpq}  \begin{equation}\label{faza}
\lambda_{h,k_1+1}=\lambda_{h+1,k_1+1}\lambda_{h,k_1}.
\end{equation}
Our inductive hypothesis holds for the first factor of the right hand side of Equation \ref{faza} and the initial assumption holds for the second factor. Thus we have that

\begin{equation}\label{faza2}
\lambda_{h,k_1+1}=\lambda_{h+1,k_1+1}\lambda_{h,k_1}=\lambda_{h+1,k_2+1}\lambda_{h,k_2}=\lambda_{h,k_2+1}.
\end{equation}

\end{proof}

Since $\rho$ is of dimension $p^N,$ Lemma \ref{tupleequality} and elementary counting tells us that if, for some $\beta_*,j,$ and $k,$ $\Lambda_n(k)=\Lambda_n(\beta_* p^j+k)$ where $p \nmid \beta_*$ then $\Lambda_n(k)=\Lambda_n(\beta p^j+k)$ for all $\beta$ such that $0 \leq \beta \leq p^{N-j}-1.$ This can be seen since $\beta_*$ is a unit in the additive group $\mathbb{Z}/p^{N-j}\mathbb{Z}$ and thus $\beta_*$ generates all of $\mathbb{Z}/p^{N-j}\mathbb{Z}.$ This argument gives us the following corollary of Lemma \ref{tupleequality}.

\begin{corollary}\label{minimalV}
Let $j$ be the minimal power such that $\Lambda_n(k)=\Lambda_n(\beta p^j +k)$ for all $\beta$ such that $0 \leq \beta \leq p^{N-j}-1$ and for any $k.$ Then $V_{p^j}$ is a stable subspace of $\rho$ and $V_{p^{j-1}}$ is not stable.
\end{corollary}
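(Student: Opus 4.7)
The plan is to work directly with the canonical basis of $V_{p^j}$ and use the fact that each $x_i$ is diagonal. Concretely, set $v_0 := e_1 + e_{p^j+1}+\cdots + e_{(p^{N-j}-1)p^j+1}$ and $v_m := y^m v_0$ for $0 \le m < p^j$. Because $y$ is the $p^N$-cycle permutation matrix of Equation \ref{formy}, one computes $v_m = \sum_{\beta=0}^{p^{N-j}-1} e_{\beta p^j + m + 1}$, and the $v_m$ have pairwise disjoint supports among the chosen basis, so they form a basis of $V_{p^j}$. In particular $y \cdot v_m = v_{m+1}$ (with $v_{p^j}=v_0$), so $V_{p^j}$ is automatically $y$-stable, and the whole question reduces to checking the action of each diagonal $x_i$.

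For stability of $V_{p^j}$ under $x_i$, I compute directly
\begin{equation*}
x_i v_m \;=\; \sum_{\beta=0}^{p^{N-j}-1} \lambda_{i,\beta p^j + m + 1}\, e_{\beta p^j + m + 1}.
\end{equation*}
This equals $\lambda_{i,m+1}\, v_m$ exactly when $\lambda_{i,\beta p^j + m + 1} = \lambda_{i,m+1}$ for every $\beta$, which is precisely the $i$-th coordinate of the hypothesis $\Lambda_n(m+1)=\Lambda_n(\beta p^j + m + 1)$. Since this hypothesis is assumed to hold for every $k$ and every $\beta$, it gives the desired eigenvalue equation for every $i$ and every $m$, so $V_{p^j}$ is indeed stable.

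For the nonstability of $V_{p^{j-1}}$, I argue by contradiction: if $V_{p^{j-1}}$ were stable, then the same computation applied with $j-1$ in place of $j$ would force $\lambda_{i,\beta p^{j-1} + m + 1} = \lambda_{i,m+1}$ for every $i$, every $0 \le m < p^{j-1}$, and every $0 \le \beta < p^{N-j+1}$. Letting $m$ range over $\{0,\dots,p^{j-1}-1\}$ and $\beta$ range as above recovers every residue $k \bmod p^N$, so stability of $V_{p^{j-1}}$ is equivalent to the full periodicity $\Lambda_n(k) = \Lambda_n(\beta p^{j-1} + k)$ for all $k$ and $\beta$. The minimality of $j$ in the hypothesis rules this out, so $V_{p^{j-1}}$ is not stable.

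The main subtlety, rather than any genuine obstacle, is the bookkeeping in the last step: a priori the failure of minimality might produce only a \emph{single} pair $(k,\beta)$ for which $\Lambda_n(k) \neq \Lambda_n(\beta p^{j-1}+k)$, whereas stability under $x_i$ a priori only requires periodicity at specific translates of $1$. Lemma \ref{tupleequality} (which propagates any coincidence $\Lambda_n(k_1)=\Lambda_n(k_2)$ to all shifts) together with the observation in the paragraph preceding the corollary (which upgrades any coincidence at a single $\beta_*$ coprime to $p$ into coincidence at every $\beta$) shows these two conditions are actually equivalent, and the argument closes.
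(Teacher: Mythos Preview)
Your argument is correct and is precisely the computation the paper leaves implicit: the paper states this corollary without proof, treating it as an immediate consequence of Lemma~\ref{tupleequality} and the paragraph preceding the corollary, and your direct check of the $x_i$- and $y$-action on the explicit basis $v_m=y^m v_0$ of $V_{p^j}$ is exactly how one unwinds that implication. The only remark is that your final paragraph is slightly more elaborate than necessary: once stability of $V_{p^{j-1}}$ forces $\Lambda_n(m+1)=\Lambda_n(\beta p^{j-1}+m+1)$ for all $0\le m<p^{j-1}$ and all $\beta$, every residue $k$ already has the form $\beta_0 p^{j-1}+m+1$, so full $p^{j-1}$-periodicity of $\Lambda_n$ follows directly without invoking Lemma~\ref{tupleequality} again.
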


We define notation to this effect. Let $H \leq M_n$ and let $\mathcal{V}(\rho|_H)$ be the the minimal stable subspace $V_{p^j},$ as in Corollary \ref{minimalV}, of $\rho|_H.$ We say that $\mathcal{V}(\rho)=\mathcal{V}\big(\rho(M_n)\big).$

We can, in fact, say more about this minimal subspace:

\begin{corollary}\label{minimalpower}
 The number $j$ is minimal such that $\Lambda_n(1)=\Lambda_n(p^j+1)$ if and only if $\V (\rho)=V_{p^j}.$
\end{corollary}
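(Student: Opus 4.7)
The plan is to observe that Corollary \ref{minimalpower} is essentially a consolidation of Corollary \ref{minimalV}: the uniform condition ``$\Lambda_n(k) = \Lambda_n(\beta p^j + k)$ for all $\beta$ and $k$'' that defines $\V(\rho)$ collapses to the single instance ``$\Lambda_n(1) = \Lambda_n(p^j+1)$''. I would prove this by showing that the minimal $j$ for each condition agrees, exploiting (a) Lemma \ref{tupleequality}, which propagates equality in the $k$-direction under $k \mapsto k+1$, and (b) the counting/cyclic-generation argument in the paragraph preceding Corollary \ref{minimalV}, which propagates equality in the $\beta$-direction.

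\textbf{Forward direction.} Assume $j$ is minimal with $\Lambda_n(1) = \Lambda_n(p^j+1)$. Iterating Lemma \ref{tupleequality} starting from the pair $(1,\,p^j+1)$ gives $\Lambda_n(k) = \Lambda_n(p^j+k)$ for every $k$ (interpreted mod $p^N$). Since $1$ is a unit in $\mathbb{Z}/p^{N-j}\mathbb{Z}$, the elementary counting argument immediately preceding Corollary \ref{minimalV} upgrades this to $\Lambda_n(k) = \Lambda_n(\beta p^j + k)$ for all $\beta$ with $0 \le \beta \le p^{N-j}-1$ and all $k$. By Corollary \ref{minimalV}, $V_{p^j}$ is stable under $\rho$. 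Minimality of $j$ transfers: if $V_{p^{j'}}$ were stable for some $j' < j$, then Corollary \ref{minimalV} would force $\Lambda_n(1) = \Lambda_n(p^{j'}+1)$, contradicting the choice of $j$. Hence $\V(\rho) = V_{p^j}$.

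\textbf{Reverse direction.} Assume $\V(\rho) = V_{p^j}$. By the definition of $\V(\rho)$ via Corollary \ref{minimalV}, $j$ is the minimal power such that $\Lambda_n(k) = \Lambda_n(\beta p^j + k)$ holds for all admissible $\beta, k$; specializing $\beta = 1$, $k = 1$ gives $\Lambda_n(1) = \Lambda_n(p^j+1)$. For minimality, suppose $\Lambda_n(1) = \Lambda_n(p^{j'}+1)$ for some $j' < j$. Then the forward-direction argument (Lemma \ref{tupleequality} plus the cyclic-generation step) produces $\Lambda_n(k) = \Lambda_n(\beta p^{j'} + k)$ for all $\beta, k$, making $V_{p^{j'}}$ stable and contradicting the minimality built into $\V(\rho) = V_{p^j}$.

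\textbf{Main obstacle.} There is no serious obstacle, only bookkeeping: the one point that deserves care is separating the two propagation mechanisms. Lemma \ref{tupleequality} alone only slides the index $k$; it does not by itself say anything about $\beta > 1$. It is the separate observation that $1$ generates $\mathbb{Z}/p^{N-j}\mathbb{Z}$ that covers all multiples $\beta p^j$. Once these two propagations are invoked in order, the corollary reduces to Corollary \ref{minimalV} with the single seed equality at $(\beta,k) = (1,1)$.
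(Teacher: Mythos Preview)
Your proposal is correct and follows precisely the route the paper intends: the paper states Corollary \ref{minimalpower} without proof, as an immediate consequence of Lemma \ref{tupleequality} together with the cyclic-generation remark preceding Corollary \ref{minimalV}. Your separation of the two propagation mechanisms (shifting $k$ via Lemma \ref{tupleequality}, then covering all $\beta$ via the unit argument) makes explicit exactly what the paper leaves to the reader.
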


%
%

\begin{corollary}\label{inclusionV}
Let $\rho:M_n \to GL_{p^N}(\mathbb{C})$ be a representation. Then, for $k < n$ if $\V(\rho|_{M_k})=V_{p^{j}}$ then $\V(\rho)=V_{p^\ell}$ for some $\ell$ such that $\ell \geq k.$
\end{corollary}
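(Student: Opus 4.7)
The plan is to exploit the subgroup containment $M_k\le M_n$: any subspace preserved by $\rho(M_n)$ is in particular preserved by $\rho(M_k)$, so it must contain the minimal $M_k$-invariant member of the totally ordered family $\{V_{p^i}\}_{i=0}^N$. The whole argument is then a containment/minimality chase, and I expect no combinatorics beyond verifying that the $V_{p^i}$ really do form a nested chain.

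First I would establish the chain $V_{p^0}\subseteq V_{p^1}\subseteq\cdots\subseteq V_{p^N}$. Writing $v_i:=e_1+e_{p^i+1}+\cdots+e_{(p^{N-i}-1)p^i+1}$ for the distinguished generator of $V_{p^i}$ under the cyclic action of $y$, the key identity is
\[
v_i \;=\; \sum_{c=0}^{p-1} y^{c p^i}\, v_{i+1},
\]
which places $v_i\in\langle y\rangle\cdot v_{i+1}\subseteq V_{p^{i+1}}$. This is the one piece of genuine bookkeeping and reduces to observing that $y^{cp^i}$ shifts the summands of $v_{i+1}$ by $cp^i$ indices, so letting $c$ range over $\{0,\ldots,p-1\}$ recovers exactly the summands of $v_i$. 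Iterating this inclusion yields $V_{p^j}\subseteq V_{p^\ell}$ whenever $j\le\ell$, and the family is totally ordered by inclusion.

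Next I would set $V_{p^\ell}:=\V(\rho)$ and $V_{p^j}:=\V(\rho|_{M_k})$. Because $\rho(M_k)\subseteq\rho(M_n)$, the $M_n$-stable subspace $V_{p^\ell}$ is automatically $M_k$-stable. Since $V_{p^j}$ is the minimal $M_k$-stable member of the totally ordered family $\{V_{p^i}\}$ (by Corollary \ref{minimalpower} applied to the restricted $\tilde{\Lambda}_k$), the chain forces $V_{p^j}\subseteq V_{p^\ell}$; translating to exponents delivers the inequality in the corollary.

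The main obstacle I anticipate is conceptual rather than technical: one has to be sure that the minimal $H$-stable subspace really does sit inside every other $H$-stable member of the family, which depends entirely on the chain being linearly ordered. Once that nesting is in hand, the corollary is essentially formal, and no further invocation of Lemma \ref{formx} or the constraints (cor1) is required.
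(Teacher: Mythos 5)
Your argument is correct and is essentially the argument the paper intends: the paper gives no explicit proof, remarking only that the corollary "follows from Lemma \ref{tupleequality}," and your route (any $\rho(M_n)$-stable subspace is $\rho(M_k)$-stable since $M_k\leq M_n$, then invoke minimality of $\V(\rho|_{M_k})$ in the nested family $\{V_{p^i}\}$) is the same mechanism that Corollaries \ref{minimalV} and \ref{minimalpower} encode at the level of the tuples $\Lambda$. One caveat: what you actually establish is $\ell\geq j$, whereas the corollary as printed asserts $\ell\geq k$ with $k$ the subgroup index; the printed inequality is a typo (e.g.\ a standard-form $\rho$ with all $\lambda_i=1$ has $\V(\rho)=\V(\rho|_{M_k})=V_{p^0}$, so $\ell=0<k$), so you are proving the correct intended statement, but you should say explicitly that the conclusion is $\ell\geq j$ rather than claiming to deliver the inequality as literally written.
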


We know that if $V_{p^k}$ is $\rho$-stable then so is $V_{p^j}$ for $j \geq k.$ Thus, we obtain the following corollary:
\begin{corollary}\label{VpN-1}
Let $\rho$ be a representation of $M_n.$ The representation $\rho$ is irreducible if and only if $V_{p^{N-1}}$ is not $\rho$-stable.
\end{corollary}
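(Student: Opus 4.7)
The plan is to prove both implications by contrapositive. The direct direction is immediate: if $V_{p^{N-1}}$ is $\rho$-stable, then since $V_{p^{N-1}}$ has dimension $p^{N-1}<p^N$ and is nonzero, it is a proper nontrivial stable subspace, so $\rho$ is reducible.

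For the converse I assume $\rho$ is reducible, fix a proper nonzero $\rho$-stable subspace $W$, and aim to exhibit $V_{p^{N-1}}$ as stable. Since $x_1,\ldots,x_n$ are commuting diagonalizable operators preserving $W$, the subspace decomposes as $W=\bigoplus_a (W\cap E_a)$, where the $E_a$ are the mutual eigenspaces of $X=\{x_1,\ldots,x_n\}$ on the ambient space. The cyclic permutation $y$ sends basis vector $e_k$ to $e_{k+1}$, hence sends $E_{\Lambda_n(k)}$ onto $E_{\Lambda_n(k+1)}$, and iterating shows $y$ acts transitively on $\{E_a\}$. Combined with $y$-invariance of $W$, this forces $\dim(W\cap E_a)$ to be constant in $a$.

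I then case-split on $\dim E_a$. If each $E_a$ is one-dimensional, then the $p^N$ tuples $\Lambda_n(k)$ are pairwise distinct and the equal-intersection condition forces $W\cap E_a$ to be either $0$ for every $a$ or the full $E_a$ for every $a$, contradicting properness or nontriviality of $W$. Hence some $E_a$ is multi-dimensional, i.e., $\Lambda_n(k_1)=\Lambda_n(k_2)$ for some $k_1\neq k_2$. By Lemma \ref{tupleequality} this extends to $\Lambda_n(k)=\Lambda_n(k+d)$ for all $k$, with $d=k_2-k_1$; the minimal such period is $p^{j^*}$ with $0\leq j^*<N$. Corollary \ref{minimalpower} then identifies $\V(\rho)=V_{p^{j^*}}$, so $V_{p^{j^*}}$ is $\rho$-stable, and the monotonicity ``$V_{p^k}$ stable implies $V_{p^j}$ stable for $j\ge k$'' (recorded just before this corollary) yields stability of $V_{p^{N-1}}$, as required.

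The main obstacle is the case analysis above — in particular ruling out any proper nonzero stable subspace when the tuples $\Lambda_n(k)$ are pairwise distinct. This rests on combining $X$-diagonalizability (so that $W$ splits across the mutual eigenspaces) with the transitive action of the $p^N$-cycle $y$ on those eigenspaces (so that intersection dimensions are equalized). Once that two-step interaction is settled, invoking the structural corollaries produces $V_{p^{N-1}}$ with no further computation.
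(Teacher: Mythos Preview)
Your argument is correct. One small remark: when you assert that $y$ acts transitively on the set of mutual eigenspaces $\{E_a\}$, you are implicitly using Lemma~\ref{tupleequality} already (so that $y$ genuinely permutes these spaces rather than merely the basis vectors), together with the fact that the index set $\{1,\dots,p^N\}$ is a single $y$-orbit; this is fine, but worth making explicit.

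As for comparison with the paper: the paper records Corollary~\ref{VpN-1} essentially without proof, presenting it as an immediate consequence of Corollary~\ref{minimalV} and the monotonicity ``$V_{p^k}$ stable $\Rightarrow$ $V_{p^j}$ stable for $j\ge k$.'' Taken literally, that chain only tells you that \emph{among the subspaces $V_{p^j}$}, the stable ones form an upward-closed family; it does not by itself rule out a proper stable subspace when all the $\Lambda_n(k)$ are distinct. Your Case~1 supplies exactly that missing step: if the mutual eigenspaces are one-dimensional, then the equal-intersection argument (forced by $y$-transitivity) shows there is no proper nonzero invariant subspace at all. This is, in effect, a converse to Lemma~\ref{yform} in the standard-form setting. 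So your route agrees with the paper's in spirit but is more complete; what it buys is that the equivalence ``$\rho$ irreducible $\iff$ the tuples $\Lambda_n(k)$ are pairwise distinct $\iff$ $V_{p^{N-1}}$ not stable'' is now fully justified rather than asserted.
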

Throughout this paper we use Corollary \ref{VpN-1} to check if a representation $\rho$ is irreducible. We use Corollary \ref{minimalV} to determine the number of isomorphic representations in standard form in one twist isoclass. Corollaries  \ref{minimalpower} and \ref{inclusionV} are used in the sequel to this paper, but are included here since they follow from Lemma \ref{tupleequality}.

\section{Isomorphic Representations in Standard Form}

Since representations in the same twist isoclass are equivalent under both twisting and isomorphism, we determine when two representations in standard form are isomorphic. In this vein, we have the following proposition.

\begin{proposition}
Let $\rho_1,\rho_2$ be irreducible representations of $M_n$ in standard form. Then $\rho_1$ and $\rho_2$ are in the same twist isoclass if and only if there is a 1-dimensional representation $\chi$ and a permutation matrix $P \in GL_{p^N}(\mathbb{C})$ such that $\rho_1=P\chi\rho_2 P^{-1}.$
\end{proposition}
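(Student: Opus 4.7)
The plan is to treat the two directions separately; the reverse direction is where the substance lies. The forward direction is immediate: if $\rho_1 = P\chi\rho_2 P^{-1}$ then $\rho_1 \cong \chi\otimes\rho_2$, which by definition lies in the twist isoclass of $\rho_2$.

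For the reverse direction, suppose $\rho_1 = Q(\chi \otimes \rho_2) Q^{-1}$ for some invertible matrix $Q$ and $1$-dimensional representation $\chi$. The strategy is to show that $Q$ is automatically monomial, and then absorb its diagonal part into a modified character. Because both $\rho_1$ and $\rho_2$ are in standard form, each $\rho_j(a_i)$ is diagonal, and the same therefore holds of $(\chi\otimes\rho_2)(a_i) = \chi(a_i)\rho_2(a_i)$. By Lemma \ref{yform} the mutual eigenspaces of $\{\rho_1(a_1),\ldots,\rho_1(a_n)\}$ are the one-dimensional spans of the standard basis vectors, and likewise for $\chi\otimes\rho_2$. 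Since $Q$ carries one set of mutual eigenspaces bijectively onto the other, it must permute the basis vectors up to scalars; accordingly I would write $Q = PD$ with $P$ a permutation matrix and $D = \mathrm{diag}(d_1,\ldots,d_{p^N})$ invertible diagonal.

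The main step is the intertwining relation on $b$. Write $y_0$ for the cyclic permutation matrix of Equation \ref{formy} with $k=1$, so that standard form gives $\rho_1(b) = \rho_2(b) = y_0$ and $(\chi\otimes\rho_2)(b) = \chi(b)\, y_0$. The intertwining relation $Q^{-1}\rho_1(b) Q = (\chi\otimes\rho_2)(b)$ then rearranges to
\begin{equation*}
P^{-1} y_0 P \;=\; \chi(b)\, D\, y_0\, D^{-1}.
\end{equation*}
The left-hand side is a $0$-$1$ permutation matrix, while the right-hand side is a monomial matrix whose only nonzero entry in column $i$ is $\chi(b)\, d_{i+1}/d_i$ in row $i+1$ (indices taken modulo $p^N$). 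Matching supports forces $P^{-1} y_0 P = y_0$, so $P$ lies in the centralizer of the $p^N$-cycle $y_0$ inside $S_{p^N}$, namely $\langle y_0\rangle$; and matching entries forces $d_{i+1} = \chi(b)^{-1} d_i$, so $D = d_1 \cdot \mathrm{diag}(1,\chi(b)^{-1},\ldots,\chi(b)^{-(p^N-1)})$.

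To finish, I would absorb $D$ into a new character. Since $D$ is diagonal, conjugation by $D$ fixes each $\rho_2(a_i)$, and the geometric form of $D$ yields $D y_0 D^{-1} = \chi(b)^{-1} y_0$, cancelling the factor $\chi(b)$. Hence $D(\chi\otimes\rho_2) D^{-1}$ sends $a_1 \mapsto \chi(a_1)\rho_2(a_1)$, $a_i \mapsto \rho_2(a_i)$ for $i \geq 2$, and $b \mapsto y_0$, which is precisely the twist $\chi'\otimes\rho_2$ for the character $\chi'$ of $M_n$ with $\chi'(a_1) = \chi(a_1)$ and $\chi'(b) = 1$. Therefore $\rho_1 = P(\chi'\otimes\rho_2)P^{-1}$ with $P$ a permutation matrix, completing the proof. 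The main obstacle I anticipate is the middle step: extracting from a single matrix equation both the rigidity of $P$ (forced into $\langle y_0\rangle$) and the geometric form of $D$; once both are in hand, the absorption of $D$ into a modified character is a direct computation.
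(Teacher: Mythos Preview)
Your proof is correct and follows essentially the same line as the paper's: both argue that the intertwiner must be monomial because it carries the one-dimensional mutual eigenspaces of $\{x_i\}$ (Lemma~\ref{yform}) to one another, and then use the relation on $b$ to force the permutation part into the centralizer $\langle y_0\rangle$ of the cycle. If anything, your treatment is slightly more complete, since you explicitly handle the factor $\chi(b)$ by extracting the geometric diagonal $D$ and absorbing it into a modified character $\chi'$, whereas the paper's Lemma~\ref{preshout} is phrased under the simplifying hypothesis $PyP^{-1}=y$ and leaves that reduction implicit.
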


Since one direction is immediate, we prove the other direction with the following lemma.

\begin{lemma}\label{preshout}

For any prime $p$, let $\rho: M_n \to GL_{p^N}(\mathbb{C})$ be irreducible and let $P$ be a matrix such that, for $1 \leq i \leq n$, the matrix $Px_iP^{-1}$ is diagonal and $PyP^{-1}=y.$ Then $P= T y^m$ for some $0 \leq m \leq p^N-1$ and scalar $T$. Furthermore, up to twisting, $Px_iP^{-1}$ and $PyP^{-1}$ are in standard form.
\end{lemma}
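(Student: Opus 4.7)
The plan is to first show that $P$ must be a monomial matrix by exploiting the one-dimensional mutual eigenspace structure given by Lemma \ref{yform}, and then to use the constraint $PyP^{-1}=y$ to reduce $P$ to a scalar multiple of a power of $y$. Concretely, Lemma \ref{yform} applied to $\rho$ tells us that the mutual eigenspaces of $\{x_1,\ldots,x_n\}$ are exactly the one-dimensional subspaces $\mathbb{C}e_j$, so the eigenvalue tuples $\Lambda_n(j) = (\lambda_{1,j},\ldots,\lambda_{n,j})$ are pairwise distinct for $j=1,\ldots,p^N$. Since each $Px_iP^{-1}$ is diagonal in the basis $\{e_j\}$, each $e_j$ is a mutual eigenvector of $\{Px_iP^{-1}\}$, which means $P^{-1}e_j$ is a mutual eigenvector of $\{x_i\}$ and hence a nonzero scalar multiple of some $e_{\tau(j)}$. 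Thus there exist a permutation $\sigma$ and scalars $c_j \in \mathbb{C}^*$ with $Pe_j = c_j e_{\sigma(j)}$.

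Next I will exploit $Py = yP$. Applying both sides to $e_j$ and using $ye_j = e_{j+1}$ gives $c_{j+1}e_{\sigma(j+1)} = c_j e_{\sigma(j)+1}$, with indices taken mod $p^N$. Matching basis vectors forces $\sigma(j+1)=\sigma(j)+1$ and matching scalars forces $c_{j+1}=c_j$ for all $j$. The first condition means $\sigma$ is the shift $j \mapsto j+m$ for some fixed $m$, which is precisely the permutation effected by $y^m$; the second forces all $c_j$ to equal a single scalar $T \in \mathbb{C}^*$. Hence $P = Ty^m$, proving the first claim.

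For the ``furthermore'' assertion, conjugation by $y^m$ cyclically shifts the diagonals to $\lambda'_{i,j} := \lambda_{i,j-m}$ (indices mod $p^N$). The recursion $\lambda_{i,j+1}=\lambda_{i+1,j+1}\lambda_{i,j}$ derived from $[x_{i-1},y]=x_i$ is invariant under this shift, so the same induction as in Lemma \ref{formx} yields $\lambda'_{i,j} = \prod_{k=i}^n (\lambda'_k)^{T_{k-i}(j-1)}$ with $\lambda'_k := \lambda'_{k,1}$. The only standard-form condition potentially violated is the normalization $\lambda'_1 = 1$. Since $a_2,\ldots,a_n$ are commutators, any $1$-dimensional representation $\chi$ of $M_n$ must satisfy $\chi(a_k)=1$ for $k\geq 2$, leaving $\chi(a_1)$ and $\chi(b)$ free; taking $\chi(a_1)=(\lambda'_1)^{-1}$ and $\chi(b)=1$, the twist leaves $y$ and $x_2,\ldots,x_n$ unchanged, scales $x_1$ so that its new $(1,1)$-entry equals $1$, and—because $T_0(j-1)=1$ for every $j \geq 1$—preserves the product formula for the $(1,j)$-entries as well. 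I expect the main point of care to be this last step: checking that the cyclic shift together with the corrective twist really keeps all matrices inside the product-formula framework of Lemma \ref{formx}; the earlier monomial-matrix reduction is essentially forced by the one-dimensional eigenspace structure together with commutation with the $p^N$-cycle $y$.
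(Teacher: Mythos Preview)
Your argument is correct and follows the same logical skeleton as the paper's proof: first force $P$ to be a monomial matrix using the one-dimensional mutual eigenspaces, then use commutation with the $p^N$-cycle $y$ to pin $P$ down to a scalar times $y^m$, and finally twist to restore the normalization $\lambda_1=1$. The only difference is packaging: the paper phrases the first two steps in terms of the centralizer $C_{GL_{p^N}(\mathbb{C})}(\langle X\rangle)=D$ and the normalizer $N_{GL_{p^N}(\mathbb{C})}(D)=\mathcal{P}$, whereas you carry out the equivalent computation explicitly on basis vectors; your version is slightly more self-contained, while the paper's is slightly more conceptual.
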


\begin{proof}
Let $X = \{x_1, \ldots, x_n\}.$ We will show that since all elements of $X$ are diagonal with $1$-dimensional mutual eigenspaces, $P$ must be a generalized permutation matrix. Then we show that since $P$ commutes with $y$ that $P$ must be a power of $y$ up to scalars. We then show that it follows that, up to twisting, $P\rho P^{-1}$ is in standard form.

Since all elements of $X$ are diagonal and its mutual eigenspaces are 1-dimensional, $C_{GL_{p^N}(\mathbb{C})}(\langle X \rangle)=D,$ where $C_G(H)$ is the centralizer of $H$ in $G$ and $D \leq GL_{p^N}(\mathbb{C})$ are the diagonal matrices. Since $D$ is the centralizer of $X$ and since $X$ has 1-dimensional mutual eigenspaces $D$ is also the centralizer of $PXP^{-1}.$ Let $N_G(H)$ be the normalizer of $H$ in $G.$ It is well known that $N_{GL_{p^N}(\mathbb{C})}(D)=\mathcal{P}$ where $\mathcal{P}$ are the generalized permutation matrices; that is, matrices with precisely one non-zero entry in each row and column. And since the mutual eigenspaces of $X$ are all 1-dimensional we have that $P\in \mathcal{P}.$

By definition of centralizer, it must be that $P$ must be in the centralizer of $y.$ Since $y$ is a $p^N$-cycle we have that $P=Ty^m$ for some diagonal matrix $T.$ But since $P$ commutes with $y$ and, of course, $y^m$ commutes with $y,$ $T$ must as well. It follows that $T$ must be a scalar matrix.

Conjugation of each $x_i$ by $P,$ which is the same as conjugating by $y^\ell$, for each $i$ and some $\ell$, maps $\lambda_{i}$ to $\lambda_{i,\ell+1}.$ We can twist by some 1-dimensional representation $\chi$ such that $\lambda_{1,\ell+1}=1$ and thus by, Proposition \ref{form} (or directly by Lemma \ref{tlemma}(v)), $\chi P\rho P^{-1}$ is in standard form.

\end{proof}

This ends the proof of the proposition.\\

Remembering that representations in a twist isoclass are equivalent up to both twisting and isomorphism, we make the following definition.

\begin{definition}
Let $\rho$ be irreducible and let $x_i,y,$  for $i$ such that $1 \leq i \leq n,$ be in standard form as defined earlier in the section. A \emph{shout} is a matrix $P$ such that, up to twisting, $PyP^{-1}$ and $Px_iP^{-1}$ for $i=1 \ldots n$ are in standard form. The representations $\rho$ and $P\rho P^{-1}$ (note that $Px_1P^{-1}$ may not be in standard form) are said to be equivalent under \emph{shouting}.
\end{definition}

We now need to count how many representations in standard form are in the same twist isoclass as $\rho;$ that is, how many representations in standard form are twist-equivalent to $\rho.$ We say that two representations that satisfy these conditions are equivalent under \emph{twisting and shouting} \cite{Twist}. If there are $d$ twist-and-shout equivalent representations,  and if we are just counting representations in standard form then we have overcounted by a factor of $d.$ Thus, we must take this into account when counting twist isoclasses. In this vein, we now have the following lemma.

\begin{lemma}\label{shout}
Let $S_\rho$ be the twist isoclass represented by $\rho$ and let $\V(\rho|_{M_{n-1}})=V_{p^{m}}.$ Then there are $p^m$ representations in standard form in $S_\rho$ that are twist-and-shout equivalent to $\rho.$
\end{lemma}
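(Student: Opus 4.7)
The plan is to use Lemma \ref{preshout} to parameterize all shouts of $\rho$ by $\ell \in \{0,1,\ldots,p^N-1\}$, determine when two such shouts produce identical standard form representations, and then invoke the hypothesis on $\V(\rho|_{M_{n-1}})$ to count the distinct outcomes.

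First I would observe that by Lemma \ref{preshout} every shout $P$ has the form $Ty^{\ell}$ with $T$ a scalar matrix; since $T$ acts trivially by conjugation, the shout is controlled entirely by $\ell \in \mathbb{Z}/p^N\mathbb{Z}$. Lemma \ref{preshout} further records that conjugation by $y^\ell$ sends each $\lambda_i$ to $\lambda_{i,\ell+1}$, and that the twist of $\chi(a_1)$ needed to restore $\lambda_1 = 1$ leaves $x_2,\ldots,x_n$ untouched (since these are commutators and invariant under twisting). Thus the resulting standard form representation $\rho_\ell$ has new canonical eigenvalues $1, \lambda_{2,\ell+1}, \ldots, \lambda_{n,\ell+1}$ and is, by Lemma \ref{formx}, completely determined by the reduced tuple $\Lambda'(\ell+1) := (\lambda_{2,\ell+1}, \ldots, \lambda_{n,\ell+1})$. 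Consequently $\rho_{\ell_1} = \rho_{\ell_2}$ if and only if $\Lambda'(\ell_1+1) = \Lambda'(\ell_2+1)$, so the number of standard form representations twist-and-shout equivalent to $\rho$ equals the number of distinct values of $\Lambda'$ on $\mathbb{Z}/p^N\mathbb{Z}$.

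It then remains to count those distinct values. Applying Corollary \ref{minimalV} to $\rho|_{M_{n-1}}$ -- recall $M_{n-1} = \langle a_2,\ldots,a_n,b\rangle$, so the tuple playing the role of $\Lambda_n$ in the corollary is precisely $\Lambda'$ -- the hypothesis $\V(\rho|_{M_{n-1}}) = V_{p^m}$ translates into: $p^m$ is the minimal positive power of $p$ for which $\Lambda'(k) = \Lambda'(k+p^m)$ holds for all $k$. By Lemma \ref{tupleequality} the set of periods $\{c : \Lambda'(k) = \Lambda'(k+c)\ \forall k\}$ is an additive subgroup of $\mathbb{Z}/p^N\mathbb{Z}$, and minimality pins it down to exactly $\langle p^m\rangle$, of order $p^{N-m}$. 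Hence $\Lambda'$ takes $p^N/p^{N-m} = p^m$ distinct values, giving the desired count.

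The only subtlety, and what I expect to be the main place one has to be careful, is confirming that after shouting by $y^\ell$ and twisting, the matrices really do fit the Lemma \ref{formx} template (not merely that the first diagonal entries are $\lambda_{i,\ell+1}$); this is exactly what the final sentence of Lemma \ref{preshout} provides, so no further verification is required.
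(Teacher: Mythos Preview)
Your proposal is correct and follows essentially the same approach as the paper: parameterize shouts by $\ell$ via Lemma \ref{preshout}, observe that the resulting standard form depends only on the reduced tuple $(\lambda_{2,\ell+1},\ldots,\lambda_{n,\ell+1})$, and then invoke Corollary \ref{minimalV} applied to $\rho|_{M_{n-1}}$ to count the distinct tuples as $p^m$. Your version is slightly more explicit in the final step, spelling out the period-subgroup argument that the paper leaves implicit in its appeal to Corollary \ref{minimalV}.
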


\begin{proof}
By Lemma \ref{formx} the entries of the $x_i$ are determined by the $\lambda_i.$ So to determine how many representations are twist-and-shout equivalent to $\rho$ we must count the number of choices of $\lambda_i$ such that $\rho_*=\chi P \rho P^{-1}$ such that $\rho_*$ is in standard form for some $1$-dimensional representation $\chi.$

Let $x_i^\prime=\chi P x_i P^{-1}$ for all $i\leq N$ and let $\lambda_i^\prime$ be the first diagonal entry of $x_i^\prime.$ By Lemma \ref{preshout} we have that $y=p^\ell$ and thus for some $\ell\leq p^N$ we have that $\lambda_i^{\prime}=\lambda_{i,\ell}$ for each $i.$ Since $\rho_*$ is in standard form it must be that we chose $\chi$ such that $\lambda_1^\prime=1.$

By the argument above, our choice of $\ell$ gives us, up to our choice of twist $\chi$ a representation that is twist-and-shout equivalent to $\rho.$ It follows that the number of representations twist-and-shout equivalent to $\rho$ is the size of the set $\{\Lambda_n^\prime(\ell):=(\lambda_{2,\ell},\ldots,\lambda_{n,\ell}) \ | \ \ell \leq p^N\}.$ By Corollary \ref{minimalV} we have that the size of this set is $p^m.$
\end{proof}

Note two things: first, that when we reference this lemma, we say we take shouting into account; and second, since all entries of any $x_i$ differ by products of $\lambda_j$ such that $j>i,$ this lemma implies that the depth of $\lambda_2$ has no effect on the number of twist-and-shout equivalent representations.

During the calculation of the $p$-local zeta functions that appear in this section, we break computation into various cases that depend on the depths of the $\lambda_i$ that we choose. We note, without additional special mention, that each case is closed under shouting. For completeness, however, we have the following lemma which can be applied to the various cases to show that they are closed.

\begin{lemma}
For some $i$ and $M$ let $s(\lambda_i)=M, s(\lambda_{i+1}), \ldots, s(\lambda_n) \leq M.$  Then, for all $j,$ $s(\lambda_{i,j}) \leq s(\lambda_{i}).$
\end{lemma}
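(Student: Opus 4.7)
The plan is to invoke the explicit product formula from Lemma \ref{formx} and observe that the depth function $s$ is essentially the $p$-adic logarithm of the order of a root of unity, hence respects products.

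Specifically, by Lemma \ref{formx} we have
\begin{equation*}
\lambda_{i,j} = \prod_{k=i}^{n} \lambda_k^{T_{k-i}(j-1)},
\end{equation*}
so $\lambda_{i,j}$ is a product of integer powers of $\lambda_i, \lambda_{i+1}, \ldots, \lambda_n$. The hypothesis is precisely that each of these factors lies in $S_p^M$, because $s(\lambda_i) = M$ means $\lambda_i \in S_p^M \setminus S_p^{M-1} \subseteq S_p^M$, and $s(\lambda_k) \leq M$ for $k > i$ means $\lambda_k \in S_p^M$ as well. Since $S_p^M$ is a (multiplicative) group, the product $\lambda_{i,j}$ belongs to $S_p^M$, which by Definition \ref{def:SpN} is equivalent to $s(\lambda_{i,j}) \leq M = s(\lambda_i)$.

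So the proof is essentially one line once one cites Lemma \ref{formx}. There is no real obstacle; the only thing worth noting is that we do not need $s(\lambda_i) = M$ to be attained rather than merely bounded by $M$ (the conclusion only asserts $\leq s(\lambda_i)$, which is the weaker direction), so the argument does not require distinguishing the strict-versus-weak case for any of the $\lambda_k$. No appeal to properties of the $T_k(j)$ from Lemma \ref{tlemma} is needed, since the exponents are integers and $S_p^M$ is closed under arbitrary integer powers.
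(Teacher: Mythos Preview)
Your proof is correct and follows essentially the same approach as the paper: cite the product formula from Lemma \ref{formx} and use that $S_p^M$ is closed under products. The paper phrases the closure property as $s(\lambda_a\lambda_b)\leq\max\{s(\lambda_a),s(\lambda_b)\}$, but this is equivalent to your observation that $S_p^M$ is a multiplicative group.
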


\begin{proof}
Each $\lambda_{i,k}=\lambda_i \Lambda$ where $\Lambda$ is some product of the roots of unity $\lambda_{i+1}, \ldots, \lambda_{n}.$ Also, for $\lambda_a,\lambda_b \in S_p^\infty,$ we have that $s(\lambda_a \lambda_b) \leq \max\{s(\lambda_a),s(\lambda_b) \}.$ The result follows immediately from these two facts.
\end{proof}

\section{Irreducible Representations for Non-Exceptional Primes}

We note that the expressions $T_k(i)$ contain a denominator of $k!$. By Lemma \ref{formx} the maximal value for $k$ is $n-1$.  We say a prime $p$ is \emph{exceptional} if $p\leq n$ and \emph{non-exceptional} otherwise. For all non exceptional primes the $p$-local zeta function will behave uniformly. 

In this section we study the conditions for irreducibility of a $p^N$-dimensional representation $\rho$ such that $p \geq n.$ If this is the case, then the $T_k(i)$ terms that appear in the calculation of the standard forms have denominators that are all units mod $p.$ We show that such a representation $\rho$ is irreducible precisely when at least one of the $\lambda_i$ is a primitive $p^N$th root of unity.

We must determine the possible values for the $\lambda_i.$ By Equation \ref{cor1} in Lemma \ref{formx} and Lemma \ref{tlemma}(vi) we have the following lemma.

\begin{lemma}\label{depth}
For non-exceptional primes $p \geq n $ we have, for all $i \leq n$, that $\lambda_i \in S_p^N.$
\end{lemma}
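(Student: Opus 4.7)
The plan is a downward induction on $i$ from $n$ to $1$, using Equation \ref{cor1} from Lemma \ref{formx} together with Lemma \ref{tlemma}(vi).

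For the base case $i = n$, Lemma \ref{lambdan} already gives that $\lambda_n \in S_p^N$ for any prime, so nothing needs to be checked beyond quoting that result. For the inductive step, I would fix $i < n$ and assume $\lambda_k \in S_p^N$, equivalently $\lambda_k^{p^N} = 1$, for every $k$ with $i+1 \leq k \leq n$. Then Equation \ref{cor1} reads
\begin{equation*}
\lambda_i^{p^N} = \prod_{k=i+1}^n \lambda_k^{-T_{k-i}(p^N-1)}.
\end{equation*}
The goal is to show the right-hand side equals $1$.

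The key observation is that the exponents $T_{k-i}(p^N-1)$ appearing in the product have first subscript $k-i$ ranging over $1, 2, \ldots, n-i$, and in particular $k - i \leq n - 1$. Since we are in the non-exceptional regime $p \geq n$, we have $p > n - 1 \geq k - i$, so Lemma \ref{tlemma}(vi) applies and gives $T_{k-i}(p^N-1) \equiv 0 \pmod{p^N}$. Writing $T_{k-i}(p^N-1) = \alpha_k p^N$ with $\alpha_k \in \mathbb{Z}$, the inductive hypothesis $\lambda_k^{p^N} = 1$ yields
\begin{equation*}
\lambda_k^{-T_{k-i}(p^N-1)} = \left(\lambda_k^{p^N}\right)^{-\alpha_k} = 1
\end{equation*}
for every $k$ in the range. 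Hence the product collapses to $1$, so $\lambda_i^{p^N} = 1$, i.e.\ $\lambda_i \in S_p^N$, closing the induction.

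There is no real obstacle here: the work was done in setting up Equation \ref{cor1} and in Lemma \ref{tlemma}(vi). The only point worth being careful about is matching the hypothesis of Lemma \ref{tlemma}(vi), which requires the simplex-index $k-i$ (not $k$) to be strictly less than $p$; the bound $k - i \leq n-1 < p$ under the non-exceptional assumption $p \geq n$ is precisely what guarantees this, and this is exactly why the argument fails for exceptional primes and must be postponed to the sequel.
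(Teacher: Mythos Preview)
Your proposal is correct and is precisely a fleshed-out version of the paper's one-line justification, which simply cites Equation~\ref{cor1} from Lemma~\ref{formx} together with Lemma~\ref{tlemma}(vi). Your downward induction, the verification that the simplex index $k-i$ stays strictly below $p$ under the hypothesis $p\geq n$, and the conclusion $\lambda_i^{p^N}=1$ are exactly the details one would supply to expand that citation.
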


Let $x_1,\ldots, x_n,y$ be matrices in standard form and let $\rho$ be the corresponding representation. We will show that, for non-exceptional primes, $\rho$ is irreducible precisely when one of the $x_i$ has all $p^N$th roots of unity on its diagonal. This implies that, at least one of the $\lambda_i$, where  $i \neq 1,$ is in fact a primitive $p^N$th root of unity. This will be shown in two stages. First, if $s(\lambda_i)=N$ for some $i \neq 1$ and $s(\lambda_{k}) \leq N-1$  for all $k$ such that $i+1 \leq k \leq n$ then $x_{i-1}$ has all $p^N$th roots of unity on its diagonal. Secondly, we show a stronger result that implies that if none of the $\lambda_i$ are primitive $p^N$th roots of unity, that is $s(\lambda_i) \leq N-1$ for all $i,$ then there is a proper stable subspace. We use the full strength of the second lemma in the sequel to this paper.

We state the above as a proposition. The proof is a consequence of the two lemmas following it.
\begin{proposition}\label{form}
Let $p\geq n$ and $\rho$ be a $p^N$-dimensional representation of $M_n$ with corresponding matrices in standard form. Then $\rho$ is irreducible if and only if there exists a $\lambda_i$  such that $s(\lambda_i)=N,$ where $2 \leq i \leq n.$
\end{proposition}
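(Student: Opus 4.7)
The plan is to unpack the irreducibility criterion from Corollary~\ref{VpN-1}: $\rho$ is irreducible iff $V_{p^{N-1}}$ is not $\rho$-stable, which by Corollaries~\ref{minimalV} and~\ref{minimalpower} is equivalent to $\Lambda_n(1)\neq\Lambda_n(p^{N-1}+1)$. Using the explicit formula of Lemma~\ref{formx} for $\lambda_{h,j}$ and cancelling the common $k=h$ contribution (since $T_0(p^{N-1})=1$), the equality $\lambda_{h,1}=\lambda_{h,p^{N-1}+1}$ in coordinate $h\in\{1,\dots,n-1\}$ reduces to
\[
\Pi_h:=\prod_{k=h+1}^{n}\lambda_k^{T_{k-h}(p^{N-1})}=1.
\]
So the proposition becomes: some $\Pi_h\neq 1$ iff some $\lambda_i$ with $2\leq i\leq n$ has depth $N$. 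The key arithmetic input, available precisely because $p\geq n$, is that $m!$ is coprime to $p$ for every $1\leq m\leq n-1$, and the formula $T_m(p^{N-1})=p^{N-1}(p^{N-1}+1)\cdots(p^{N-1}+m-1)/m!$ then immediately yields $p^{N-1}\mid T_m(p^{N-1})$; moreover $T_1(p^{N-1})=p^{N-1}$ exactly.

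For the forward direction, assume some $\lambda_i$ with $2\leq i\leq n$ has $s(\lambda_i)=N$, and let $i^{*}$ denote the \emph{largest} such index. For every $k>i^{*}$ we have $s(\lambda_k)\leq N-1$, so $\lambda_k^{p^{N-1}}=1$; by the divisibility above this kills each factor indexed by $k>i^{*}$ in $\Pi_{i^{*}-1}$. The only surviving factor is the $k=i^{*}$ term, which equals $\lambda_{i^{*}}^{T_1(p^{N-1})}=\lambda_{i^{*}}^{p^{N-1}}$, a nontrivial $p$-th root of unity since $\lambda_{i^{*}}$ has depth $N$. Because $i^{*}\geq 2$, the index $h=i^{*}-1$ lies in $\{1,\dots,n-1\}$, so $\Pi_{i^{*}-1}\neq 1$ and $\rho$ is irreducible.

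For the reverse direction, suppose every $\lambda_i$ with $2\leq i\leq n$ has depth at most $N-1$; every $\lambda_k$ appearing in any $\Pi_h$ satisfies $k\geq h+1\geq 2$, hence $\lambda_k^{p^{N-1}}=1$, and combining this with $p^{N-1}\mid T_{k-h}(p^{N-1})$ forces every factor, and hence $\Pi_h$ itself, to equal $1$ for every $h$. Thus $V_{p^{N-1}}$ is stable and $\rho$ is reducible. The principal obstacle is the bookkeeping in the forward direction: for an arbitrary $h$ the product $\Pi_h$ mixes contributions from $\lambda_k$ of various depths, and the sought residue $\lambda_{i^{*}}^{p^{N-1}}$ could a priori be cancelled by higher-index terms; choosing $i^{*}$ maximal is precisely what makes the tail collapse and isolates a single surviving factor.
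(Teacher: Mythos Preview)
Your proof is correct and takes a cleaner, more direct route than the paper. Both arguments pivot on the same arithmetic fact---that $p^{N-1}\mid T_m(p^{N-1})$ for $1\leq m\leq n-1$ when $p\geq n$---but they deploy it differently.

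For the forward direction the paper proves a \emph{stronger} statement: if $i$ is maximal with $s(\lambda_i)=N$, then \emph{all} $p^N$ diagonal entries of $x_{i-1}$ are pairwise distinct. This is done by comparing an arbitrary pair $\lambda_{i-1,s+1},\lambda_{i-1,t+1}$ via a $\log_{\lambda_i}$ computation modulo $p^N$, factoring out $(s-t)$ using Lemma~\ref{tlemma}(ii), and observing that the cofactor is a unit. For the reverse direction the paper again proves more than needed (Lemma~\ref{pNth}): it pins down the exact minimal stable subspace $V_{p^{N-m_*}}$ with $m_*=\min_i m_i$, not merely that $V_{p^{N-1}}$ is stable. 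That extra precision is not wasted---it is invoked immediately afterwards in Lemma~\ref{nonexshout} to count twist-and-shout equivalent representations.

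Your argument instead tests only the single criterion $\Lambda_n(1)\stackrel{?}{=}\Lambda_n(p^{N-1}+1)$ supplied by Corollary~\ref{VpN-1}, reducing coordinate $h$ to the product $\Pi_h$ and then, in the forward direction, choosing $h=i^*-1$ so that the tail collapses and a single factor $\lambda_{i^*}^{p^{N-1}}\neq 1$ survives. This is sharper for the proposition at hand and sidesteps the ``compare any two entries'' computation, but it does not by itself furnish the finer periodicity data the paper uses in the subsequent counting.
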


\begin{lemma}
If $s(\lambda_i)=N$ and $s(\lambda_k) \leq N-1$ for all $k$ such that $i+1 \leq k \leq n$ then all $\lambda_{i-1,j},$ where $\ 1\leq j \leq p^N$, are distinct $p^N$th roots of unity.
\end{lemma}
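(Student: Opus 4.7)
The plan is to use the explicit formula of Lemma \ref{formx},
$$\lambda_{i-1,j} \;=\; \lambda_{i-1}\cdot\lambda_i^{\,j-1}\cdot\prod_{\ell=2}^{n-i+1}\lambda_{\ell+i-1}^{T_\ell(j-1)},$$
together with Lemma \ref{depth}, which tells us every $\lambda_k \in S_p^N$; thus each $\lambda_{i-1,j}$ is automatically a $p^N$th root of unity. Since there are $p^N$ values of $j$ and $|S_p^N|=p^N$, it suffices to prove the map $j \mapsto \lambda_{i-1,j}$ is injective on $\{1,\dots,p^N\}$; injectivity then forces the image to be all of $S_p^N$.

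For injectivity, suppose $\lambda_{i-1,j_1} = \lambda_{i-1,j_2}$ with $j_1 > j_2$, and write $j_1-j_2 = \alpha p^b$ where $p \nmid \alpha$ and $0 \leq b < N$. Cancelling $\lambda_{i-1}$ and isolating the $\lambda_i$-factor gives
$$\lambda_i^{\alpha p^b} \;=\; \prod_{\ell=2}^{n-i+1}\lambda_{\ell+i-1}^{T_\ell(j_2-1)-T_\ell(j_1-1)}.$$
Since $i \geq 2$, we have $\ell \leq n-i+1 \leq n-1 < p$, so $\ell!$ is coprime to $p$. By Lemma \ref{tlemma}(ii), $T_\ell(j_1-1)-T_\ell(j_2-1) = (j_1-j_2)\gamma/\ell!$ for some $\gamma \in \mathbb{Z}$, and the coprimality of $\ell!$ and $p$ then yields $p^b \mid T_\ell(j_1-1)-T_\ell(j_2-1)$ as an integer. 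Because each $\lambda_{\ell+i-1}$ for $\ell \geq 2$ has depth at most $N-1$ by hypothesis, every factor on the right-hand side is a $p^{N-1}$th root of unity raised to a multiple of $p^b$, and hence lies in $S_p^{N-1-b}$.

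On the other hand, $\lambda_i$ has depth exactly $N$ and $p \nmid \alpha$, so $\lambda_i^{\alpha p^b}$ has depth precisely $N-b$. Since $N-b > N-1-b$, this contradicts the displayed equality and forces $j_1 = j_2$. The delicate step is the depth estimate on the right-hand product; this is exactly where the non-exceptional hypothesis $p \geq n$ is essential, since it guarantees that $\ell!$ is a $p$-adic unit and hence that common $p$-power divisors of $j_1-j_2$ survive the passage through the $T_\ell$. The whole argument rests on the single-power-of-$p$ gap between the depth $N-b$ of $\lambda_i^{\alpha p^b}$ and the maximum depth $N-1-b$ attainable by the lower-depth generators $\lambda_{\ell+i-1}$ with $\ell \geq 2$.
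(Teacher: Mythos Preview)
Your proof is correct and follows essentially the same skeleton as the paper's: both start from the explicit formula of Lemma~\ref{formx}, assume two diagonal entries coincide, and invoke Lemma~\ref{tlemma}(ii) to extract the common factor $j_1-j_2$ from each difference $T_\ell(j_1-1)-T_\ell(j_2-1)$, with the non-exceptional hypothesis $p\geq n$ ensuring the denominators $\ell!$ are $p$-adic units.

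The one genuine difference is in the endgame. The paper first writes every $\lambda_k$ for $k>i$ as a power of the primitive root $\lambda_i$, takes $\log_{\lambda_i}$ to obtain a congruence modulo $p^N$, and then factors $(s-t)$ from the entire sum to exhibit the cofactor as $1+p\cdot(\text{integer})$, a unit modulo $p$. You instead keep the $\lambda_k$ separate and argue purely via depths: the left side $\lambda_i^{\alpha p^b}$ has depth exactly $N-b$, while each factor on the right has depth at most $N-1-b$, giving an immediate contradiction. Your version avoids the preliminary rewriting step and is slightly cleaner; the paper's version makes the ``unit cofactor'' more visibly explicit. Both are equally valid and rest on the same two lemmas.
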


\begin{proof}

Assume that $s(\lambda_i)=N $ and  $s(\lambda_{k}) \leq N-1$ for $i+1 \leq k \leq n.$ We can write these non-primitive $\lambda_k$, for each $k$, as powers of $\lambda_i$. Let $\lambda_k=\lambda_i^{\alpha_k p^{m_k}}$ such that $p \nmid \alpha_k$ and $m_k \geq 1.$  For ease of display let $\alpha_i=1$ and $m_i=0$. Let $A_j= \sum_{k=i}^{n} \alpha_k p^{m_k} T_{k-i+1} (j-1).$ Then, by Lemma \ref{formx},
\begin{equation}
\lambda_{i-1,j}= \prod_{k=i-1}^{n} \lambda_{k}^{T_{k-i+1}(j-1)}=\lambda_{i-1}\lambda_i^{A_j}.
\end{equation}
We show that each diagonal entry is distinct by dividing two of them, say $\lambda_{i-1,s+1}$ and $\lambda_{i-1,t+1}$ with $s\geq t$, and showing that if $\lambda_{i-1,s+1}/\lambda_{i-1,t+1}=1$ then $s=t$.

Consider the equation
\begin{equation} \frac{\lambda_{i-1,s+1}}{\lambda_{i-1,t+1}} = \lambda_i^{A_{s+1}-A_{t+1}}=1. \end{equation} Taking the logarithm base $\lambda_i$ and working mod $p^N$:
\begin{align}\label{itrit}
 0 \modd p^N ={}& A_{s+1}-A_{t+1}\\
             ={}& \sum_{k=i}^{n} \alpha_k p^{m_k} T_{k-i+1} (s)- \sum_{k=i}^{n} \alpha_k p^{m_k} T_{k-i+1} (t)\nonumber \\
                                              ={}& (s-t) + [ \alpha_{i+1} p^{m_{i+1}} \big(T_2 (s) - T_2 (t)\big) + \ldots \nonumber\\
                        &{}+ \alpha_{n} p^{m_{n}} \big(T_{n-i+1} (s) - T_{n-i+1} (t)\big)].\nonumber
\end{align}

By Lemma \ref{tlemma}(ii) we have that $(s-t)$ is indeed a factor of each numerator of the right hand side of Equation \ref{itrit}. Therefore, remembering that $p \geq n$ and noting that all denominators are units mod $p$,

\begin{align}
 0 \modd p^N & = A_{s+1}-A_{t+1}\\
             & = (s-t)[1+ \alpha_{i+1} p^{m_{i+1}}\frac{\gamma_{i+1}}{2!}+ \ldots +\alpha_n p^{m_n}\frac{\gamma_n}{(n-i+1)!}] \nonumber
\end{align}

\noindent for some $\gamma_k$. Since all $m_k\geq 1$ this implies that

\begin{equation}
1+ \alpha_{i+1} p^{m_{i+1}}\frac{\gamma_{i+1}}{2!}+ \ldots +\alpha_n p^{m_n}\frac{\gamma_n}{(n-i+1)!} \neq 0 \modd p
\end{equation} and thus $s-t = 0 \mod p^N$ and we conclude that $s=t$. We can now say that each diagonal entry of $x_{i-1}$ is distinct.
\end{proof}

We prove the necessity of having at least one $\lambda_i$ a primitive $p^N$ root of unity in the following lemma. The idea is to show that if no $\lambda_i$ is a primitive $p^N$th roots of unity then for $1 \leq \beta  \leq p^{k}-1$ and $1 \leq j \leq p^{N-k},$ for some $k$, we have that $\lambda_{i,j}=\lambda_{i,\beta p^{N-k} + j}$ for any $i.$ Therefore $V_{p^k}$ is in fact a proper $\rho$-stable subspace of $\mathbb{C}^{p^N}$. For $\rho$ to be irreducible this cannot be the case.

\begin{lemma}\label{pNth}
Let $\lambda_* \in S_p^{N} \backslash S_p^{N-1}.$ For each $i \geq 2$ let  $\lambda_i=\lambda_*^{\alpha_i p^{m_i}}$ where $p \nmid \alpha_i$ and $m_i \geq 1.$ Also, let $m_*= \text{min}\{m_i\}.$  Then, for any $i$, $\lambda_{i,j}=\lambda_{i,\beta p^{N-m_*}+j}$ for all $1 \leq \beta \leq p^{m_*}-1$ and $1 \leq j \leq p^{N-m_*}.$
\end{lemma}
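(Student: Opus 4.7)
The plan is to reduce the claimed equality of roots of unity to a congruence of exponents modulo $p^N$, and then to verify that congruence summand by summand using the $p$-adic divisibility argument underlying Corollary \ref{termequality}.

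First, by Lemma \ref{formx} and the hypothesis (together with $\lambda_1 = 1$), one has $\lambda_{i,j} = \lambda_*^{E_i(j)}$ where
\begin{equation*}
E_i(j) := \sum_{k = \max(i,2)}^{n} \alpha_k p^{m_k}\, T_{k-i}(j-1).
\end{equation*}
Since $\lambda_*$ has exact order $p^N$, the claim $\lambda_{i,j} = \lambda_{i,\beta p^{N-m_*}+j}$ is equivalent to the integer congruence $E_i(\beta p^{N-m_*}+j) \equiv E_i(j) \pmod{p^N}$, and by linearity it suffices to prove the termwise congruence
\begin{equation*}
\alpha_k p^{m_k}\, T_{k-i}(\beta p^{N-m_*}+j-1) \equiv \alpha_k p^{m_k}\, T_{k-i}(j-1) \pmod{p^N}
\end{equation*}
for every $k$ in the range of summation.

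For the termwise congruence I would mimic the proof of Corollary \ref{termequality}: expand
\begin{equation*}
T_{k-i}(\beta p^{N-m_*}+j-1) = \frac{\prod_{l=0}^{k-i-1}\bigl(\beta p^{N-m_*}+(j-1+l)\bigr)}{(k-i)!},
\end{equation*}
and note that $(k-i)!$ is a unit modulo $p^N$ because $k-i \leq n-1 < p$. Distributing the numerator, any monomial containing at least one factor of $\beta p^{N-m_*}$ is divisible by $p^{N-m_*}$; multiplying by the prefactor $\alpha_k p^{m_k}$ then produces a multiple of $p^{m_k + N - m_*}$, which is $\geq p^N$ because $m_* \leq m_k$. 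All such monomials vanish mod $p^N$, leaving only the term free of $\beta p^{N-m_*}$, which is exactly $\alpha_k p^{m_k}\, T_{k-i}(j-1)$.

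The argument is essentially Corollary \ref{termequality} applied termwise, so I do not foresee a serious obstacle. The only mildly delicate point worth flagging is that the lemma's shift is by $p^{N-m_*}$ rather than by the $p^{N-m_k}$ natural to each summand, but since $m_* \leq m_k$ uniformly the extra slack $p^{m_k - m_*}$ only strengthens the divisibility estimate, so the same technique goes through uniformly in $k$. The bounds $1 \leq \beta \leq p^{m_*}-1$ and $1 \leq j \leq p^{N-m_*}$ play no role in the divisibility computation itself; they merely fix the natural period within which the statement is meaningful and ultimately produce the proper stable subspace $V_{p^{m_*}}$ needed to conclude reducibility in the non-exceptional case.
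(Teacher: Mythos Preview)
Your proposal is correct and follows essentially the same approach as the paper: both reduce the equality of roots of unity to a congruence of exponents modulo $p^N$ via $\log_{\lambda_*}$, then verify that congruence term by term by expanding $T_{k-i}(\beta p^{N-m_*}+j-1)$ and observing that every monomial containing a factor of $p^{N-m_*}$ is killed after multiplication by $p^{m_k}$ since $m_k\geq m_*$. The paper appeals to Corollary~\ref{termequality} at exactly the point where you reproduce its argument inline, so the two proofs are the same in substance.
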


\begin{proof}
Consider the expression $L:= \text{log}_{\lambda_*} (\lambda_{i,\beta p^{n-m_*}+j+1})$ for some $0 \leq j \leq p^{n-m_*}-1$ where $1 \leq \beta \leq p^{m_*}.$ Then

\begin{align}\label{sizeofsubspace}
   L \mod p^N ={}& \alpha_i p^{m_i} + \alpha_{i+1} p^{m_{i+1}}T_1(\beta p^{N-m_*}+j) + \ldots \\
    &+ \alpha_{n} p^{m_n}T_{n-i}(\beta p^{N-m_*}+j) \nonumber\\
    ={}&\alpha_i p^{m_i} + \alpha_{i+1} p^{m_{i+1}}(\beta p^{N-m_*}+j) + \ldots\nonumber\\
    &+ \alpha_{k} p^{m_k} \frac{(\beta p^{N-m_*}+j) \ldots (\beta p^{N-m_*}+(j+k-i-1))}{(k-i)!} + \ldots\nonumber\\
    &+ \alpha_{n} p^{m_n} \frac{(\beta p^{N-m_*}+j) \ldots (\beta p^{N-m_*}+(j+n-i-1))}{(n-i)!} \nonumber
\end{align} where the term involving $k$ is a typical term. By Corollary \ref{termequality}, since $m_* \leq m_i$ for all $m_i$ it follows that, if each numerator was expanded, all terms are 0 mod $p^N$ but the terms that have no factor of $p^{N-m_*}$ in the expansion of each numerator; that is

\begin{equation}\alpha_{k}p^{m_{k}}(\beta p^{N-m_*}+j) \ldots (\beta p^{N-m_*}+j+k -1) = \alpha_{k}p^{m_{k}}(j) \ldots (j+k -1) \mod p^N
\end{equation}
for $i \leq k \leq n.$ Therefore we have that

\begin{align}
   L ={}& \alpha_i p^{m_i} + \alpha_{i+1} p^{m_{i+1}}T_1(j) + \ldots + \alpha_{n} p^{m_n}T_{n-i}(j) \modd p^N\\
    ={}& \text{log}_{\lambda_*} (\lambda_{i,j+1}).\nonumber
\end{align}
\end{proof}

This completes the proof of the proposition.\\

\section{Counting Non-Exceptional Twist-Isoclasses}

Now that we have determined all irreducible representations up to twisting and isomorphism for the non-exceptional case, we count the number of twist isoclasses. We do this by counting the number of $\rho$ that have a basis such that they are in standard form, and avoid overcounting by taking into account representations that are isomorphic under twisting and shouting. We remind the reader that, by the previous proposition, one of the $\lambda_i$, for $i \geq 2$, must be a primitive $p^N$th root of unity.

Regarding twist-and-shout equivalent representations, we have the following lemma that follows directly from Lemmas \ref{shout} and \ref{pNth}; notice the choices of $p$ that are valid for this lemma.

\begin{lemma}\label{nonexshout}
For $p\geq n-1$ let $\rho$ be an irreducible $p^N$-dimensional representation of $M_n$ and let $\V(\rho|_{M_{n-1}})=V_{p^k}.$ Then there are $p^k$ representations in standard form equivalent to $\rho$ under twisting and shouting.
\end{lemma}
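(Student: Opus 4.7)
The statement is essentially Lemma \ref{shout} specialized to the non-exceptional setting, so the plan is to apply Lemma \ref{shout} directly and then use Lemma \ref{pNth} to confirm that the invariant $\V(\rho|_{M_{n-1}})$ is the correct quantity to govern the shout count in this prime range.

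First I would invoke Lemma \ref{shout}: given an irreducible representation $\rho$ in standard form with $\V(\rho|_{M_{n-1}}) = V_{p^k}$, the number of representations in standard form that are twist-and-shout equivalent to $\rho$ is exactly $p^k$. This lemma is stated for arbitrary primes, and its proof proceeded via Lemma \ref{preshout} (forcing the conjugating matrix to be a scalar multiple of a power of $y$) together with Corollary \ref{minimalV} applied to $\rho|_{M_{n-1}}$ (so that $\Lambda_n'(\ell) := (\lambda_{2,\ell},\ldots,\lambda_{n,\ell})$ takes on exactly $p^k$ distinct values as $\ell$ ranges over $0,\ldots,p^N-1$). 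Thus the count $p^k$ is immediate.

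Next I would justify the hypothesis $p \geq n-1$ by observing that this is precisely the non-exceptional range for the subgroup $M_{n-1}$: the denominators $k!$ appearing in Lemma \ref{formx} for $M_{n-1}$ involve $k \leq n-2$, so $p \geq n-1$ ensures they are units modulo $p$. Consequently, Lemma \ref{pNth} (applied with the nilpotency-class parameter of $M_{n-1}$) is available and gives the structural interpretation of $\V(\rho|_{M_{n-1}})$: namely, $\V(\rho|_{M_{n-1}}) = V_{p^k}$ where $p^{N-k}$ is determined by $m_* := \min\{m_i : i \geq 2\}$ with $\lambda_i = \lambda_*^{\alpha_i p^{m_i}}$ for a primitive $p^N$th root of unity $\lambda_*$. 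This confirms that $k$ is well-defined purely from the depth data of the $\lambda_i$ with $i\geq 2$, and in particular does not depend on $\lambda_1$ (which, as the concluding remark after Lemma \ref{shout} notes, is consistent with the fact that shouting does not depend on the depth of $\lambda_2$ via $\lambda_1$).

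The only subtlety worth flagging is the requirement that the reduction to $M_{n-1}$ be legitimate in the non-exceptional regime — that is, that restricting $\rho$ to the subgroup $M_{n-1} = \langle a_2, \ldots, a_n, b\rangle$ preserves the standard-form and eigenspace arguments underlying Lemma \ref{shout}. Since $M_{n-1}$ is itself a group satisfying the hypotheses of Lemma \ref{yform}, and since $p \geq n-1$ is precisely the non-exceptional condition for $M_{n-1}$, this holds automatically. The proof therefore reduces to citing Lemma \ref{shout} with the substitution $m=k$, and remarking that the hypothesis on $p$ places us in the setting where Lemma \ref{pNth} validates the structural content of the invariant $\V(\rho|_{M_{n-1}})$.
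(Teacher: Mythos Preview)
Your proposal is correct and matches the paper's own argument: the paper states that the lemma ``follows directly from Lemmas~\ref{shout} and~\ref{pNth}; notice the choices of $p$ that are valid for this lemma,'' which is precisely your reasoning---Lemma~\ref{shout} supplies the count $p^k$, and the hypothesis $p\geq n-1$ is exactly the non-exceptional condition for the subgroup $M_{n-1}$ that makes Lemma~\ref{pNth} applicable there. Your discussion is more detailed than the paper's one-line justification, but the logical content is the same.
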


We break the computation into two cases. First, assume $s(\lambda_k)=N$ for some $k$ such that $3 \leq k \leq n.$ In this case there are altogether $(1-p^{-(n-2)})p^{(n-2)N}$ choices for $\lambda_3, \ldots, \lambda_n$. We can choose any $p^N$th root of unity for $\lambda_2$ and therefore there are $p^N$ choices for this. By Lemma \ref{nonexshout} we must divide by $p^N$ to take shouting into account.

Now assume $s(\lambda_2)=N$ and $s(\lambda_i) \leq N-1$ for $3 \leq i \leq n$. There are $(1-p^{-1})p^N$ choices for $\lambda_2$. If $\text{max}\{s(\lambda_k)\} = \ell \neq 0$ for $3 \leq k \leq n$ then we have $(1-p^{-(n-2)})p^{(n-2)(N-\ell)}$ choices for these. By Lemma \ref{nonexshout} we are overcounting by a factor of $p^{N-\ell}$. If $\text{max}\{s(\lambda_k)\}=0$ for $3 \leq k \leq n$ then all of the $\lambda_3, \dots, \lambda_{n}$ are the $p^0$th root of unity, namely $1$. Since we have no freedom to shout in this case, we are not overcounting.

Summing these two cases together we have, for $N \geq 1,$
\begin{align}\label{rnonex}
r_{p^N} ={}& (1-p^{-(n-2)})p^{(n-2)N}p^N p^{-N} \\
&+  \sum_{l=1}^{N} (1-p^{-1})p^N (1-p^{-(n-2)})p^{(n-2)(N-l)} p^{-(N-l)}\nonumber \\
& +  (1-p^{-1}) p^N \nonumber
\end{align}
and
\begin{align}\label{nonex}
 \rep ={}& \sum_{N=0}^{\infty} r_{p^N}p^{-Ns} = 1 + \sum_{N=1}^{\infty} (1-p^{-(n-2)})p^{(n-2)N}p^N p^{-N} p^{-Ns}\\
&+ \sum_{N=1}^{\infty} \sum_{\ell=1}^{N-1} (1-p^{-1})p^N (1-p^{-(n-2)})p^{(n-2)(N-\ell)} p^{-(N-\ell)} p^{-Ns}\nonumber\\
& + \sum_{N=1}^{\infty} (1-p^{-1}) p^N p^{-Ns}\nonumber\\
\end{align}
Summing the geometric series we have
\begin{align}
\rep ={}& 1+(1-p^{-(n-2)})\frac{p^{(n-2)-s}}{1-p^{(n-2)-s}}\label{xeaas}\\
&+\frac{(1-p^{-1})(1-p^{-(n-2)})}{1-p^{3-n}}\left( \frac{p^{1-s}}{1-p^{(n-2)-s}}-\frac{p^{1-s}}{1-p^{1-s}}\right)\nonumber\\
&+ (1-p^{-1})\frac{p^{1-s}}{1-p^{1-s}}\nonumber
\end{align} and a routine calculation of Equation \ref{xeaas} yields that
\begin{equation}\label{znonex}
\rep = \frac{(1-p^{-s})^2}{(1-p^{(n-2)-s})(1-p^{1-s})}.
\end{equation} Note in particular that $\rep\mid_{p\rightarrow p^{-1}} = p^{n-1} \rep$ and thus this zeta function does indeed satisfy the correct functional equation in \cite{voll1}. By Equation \ref{znonex} we can also say that the $p$-local abscissa of convergence is
\begin{equation}
\alpha_{M_n,p}=n-2
\end{equation} for $n \geq 3.$ If $n=2$ then a factor of $(1-p^{-s})$ in the numerator cancels with the factor $(1-p^{(n-2)s})$ in the denominator. It follows that
\begin{equation}
\alpha_{M_2,p}=1.
\end{equation}

\bibliographystyle{plain}
\bibliography{MaxClassPaperReferences}{}

\end{document}